\newtheorem{teo}{Theorem}[section]
\newtheorem{prop}[teo]{Proposition}
\newtheorem{lemma}[teo] {Lemma}
\newtheorem{coro}[teo]{Corollary}
\newtheorem{example}{Example}
\newtheorem{remark}{Remark}
\def\qed{\hfill \mbox{$\square$}}
\def\Hom{\mathop{\rm Hom}\nolimits}
\def\Ext{\mathop{\rm Ext}\nolimits}
\def\Ker{\mathop{\rm Ker}\nolimits}
\def\Im{\mathop{\rm Im}\nolimits}
\def\dim{\mathop{\rm dim_k}\nolimits}
\newenvironment{proof}{\noindent {\it\bf Proof.} \rm}
\def \Sub {\mathop{\rm Sub} \nolimits}
\def\Hom{\mathop{\rm Hom}\nolimits}
\def\Ext{\mathop{\rm Ext}\nolimits}
\def\Ker{\mathop{\rm Ker}\nolimits}
\def\dim{\mathop{\rm dim_k}\nolimits}
\def\HH {\mathop{\rm HH}\nolimits}
\def\HH {\mathop{\rm HH}\nolimits}
\def\R{\mathcal R}
\def\P{\mathcal P}
\begin{document}

\title{Hochschild cohomology of triangular string algebras and its ring structure}
\date{}
\author{Mar\'\i a Julia Redondo and Lucrecia Rom\'an \footnote{Instituto de Matem\'atica,
Universidad Nacional del Sur, Av. Alem 1253, (8000) Bah\'\i a Blanca, Argentina.
{\it E-mail address:  mredondo@criba.edu.ar, lroman@uns.edu.ar}}
\thanks{The first author is a researcher from CONICET, Argentina. This work has been supported by the project PICT-2011-1510.  }}

\maketitle

\begin{abstract}
We compute the Hochschild cohomology groups $\HH^*(A)$ in case $A$ is a triangular string algebra, and show that its ring structure is trivial.
\end{abstract}

\section{Introduction}

Let $A$ be an associative, finite dimensional algebra over an algebraically closed field $k$.  It is well known that there exists a finite quiver $Q$ such that $A$ is Morita equivalent to $kQ/I$, where $kQ$ is the path algebra of $Q$ and $I$ is an admissible two-sided ideal of $kQ$.  

A finite dimensional algebra is called {\it biserial} if the radical of every projective indecomposable module is the sum of two uniserial modules whose intersection is simple or zero, see~\cite{F}.  
These algebras have been studied by several authors and from different points of view since there are a lot of natural examples of algebras which turn out to be of this kind. 

The representation theory of these algebras was first studied by Gelfand and Ponomarev in~\cite{GP}: they have provided the methods in order to classify all their indecomposable representations. This classification shows that biserial algebras are always tame, see also~\cite{WW}. They are an important class of algebras whose representation theory has been very well described, see~\cite{AS,BR}. 

The subclass of {\it special biserial algebras} was first studied by Skowro\'nski and Waschb{\"u}ch in~\cite{SW} where they  characterize
the biserial algebras of finite representation type.  A classification of the special biserial algebras which are minimal representation-infinite has been given by Ringel in \cite{R}.

An algebra is called a  {\it string algebra} if it is Morita equivalent to a monomial special biserial algebra.

The purpose of this paper is to study the Hochschild cohomology groups of a string algebra $A$ and describe its ring structure.

Since $A$ is an algebra over a field $k$, the Hochschild cohomology goups ${\HH}^i(A,M)$ with coefficients in an $A$-bimodule $M$ can be identified with the groups $\Ext_{A-A}^i(A,M)$.  In particular, if $M$ is the $A$-bimodule $A$, we simple write ${\HH}^i(A)$.

Even though the computation of the Hochschild cohomology groups ${\HH}^i(A)$ is rather complicated, some approaches have been successful when the algebra $A$ is
given by a quiver with relations. For instance, explicit formula for the dimensions of ${\HH}^i(A)$ in terms of
those combinatorial data have been found in~\cite{blm,c1,c2,crs,H,Re}. In particular, Hochschild cohomology of special biserial algebras has been considered in~\cite{Bu,ST}.

In the particular case of monomial algebras, that is, algebras $A=KQ/I$ where $I$ can be choosen as generated by paths, one has a detalied description of a minimal resolution of the $A$-bimodule $A$, see~\cite{B}. In general, the computation of the Hochschild cohomology groups using this resolution may lead to hard combinatoric computations.  However, for string algebras the  resolution, and the complex associated, are easier to handle.

The paper is organized as follows.  In Section 2 we introduce all the necessary terminology.  In Section 3 we recall the resolution given by Bardzell for monomial algebras in \cite{B}.  In Section 4 we present all the computations that lead us to Theorem \ref{HH} where we present the dimension of all the Hochschild cohomology groups of triangular string algebras. In Section 5 we describe the ring structure of the Hochschild cohomology of triangular string algebras.

\section {Preliminaries}

\subsection{Quivers and relations}

Let $Q$ be a  finite quiver with a set of vertices $Q_0$, a set of arrows $Q_1$ and $s, t : Q_1 \to Q_0$ be the maps associating to each arrow $\alpha$ its source  $s(\alpha)$ and its target $t(\alpha)$.  A path $w$ of length $l$ is a sequence of $l$ arrows $\alpha_1 \dots \alpha_l$ such that $t(\alpha_i)=s(\alpha_{i+1})$. We denote by $\vert w \vert $ the length of the path $w$. We put $s(w)=s(\alpha_1)$ and $t(w)=t(\alpha_l)$. For any vertex $x$ we consider $e_x$ the trivial path of length zero and we put $s(e_x)=t(e_x)=x$.  An oriented cycle is a non trivial path $w$ such that $s(w)=t(w)$.  If $Q$ has no oriented cycles, then $A$ is said a {\it triangular algebra}.

We say that a path $w$ divides a path $u$ if $u = L(w) w R(w)$, where $L(w)$ and $R(w)$ are not simultaneously paths of length zero.

The path algebra $kQ$ is the $k$-vector space with basis the set of paths in $Q$; the product on the basis elements is given by the concatenation of the sequences of arrows of the paths $w$ and $w'$ if they form a path (namely, if $t(w)=s(w')$) and zero otherwise.  Vertices form a complete set of orthogonal idempotents. Let $F$ be the two-sided ideal of $kQ$ generated by the arrows of $Q$. A two-sided ideal $I$ is said to be \textit{admissible} if there exists an integer $m \geq 2$ such that $F^m \subseteq I \subseteq F^2$.  The pair $(Q,I)$  is called a \textit{bound quiver}.

It is well known that if $A$ is a basic, connected, finite dimensional algebra over an algebraically closed field $k$, then there exists a unique finite quiver $Q$ and a surjective morphism of $k$-algebras $\nu: kQ \to A$, which is not unique in general, with $I_\nu=\Ker \nu$ admissible.  The pair $(Q,I_\nu)$  is called a \textit{presentation} of $A$.  The elements in $I$ are called {\it relations}, $kQ/I$ is said a {\it monomial algebra} if the ideal $I$ is generated by paths, and a relation is called {\it quadratic} if it is a path of length two.

\subsection{String algebras}

Recall from~\cite{SW} that a bound quiver $(Q,I)$ is  {\it special biserial} if it satisfies the following conditions:
\begin{itemize}
\item [S1)] Each vertex in $Q$ is the source of at most two arrows and the target of at most two arrows;
\item [S2)] For an arrow $\alpha$ in $Q$ there is at most one arrow $\beta$ and at most one arrow $\gamma$ such that $\alpha\beta \not \in I$ and $ \gamma \alpha \not \in I$.
\end{itemize}
If the ideal $I$ is generated by paths, the bound quiver $(Q,I)$ is  {\it string}.

\medskip

An algebra is called {\it special biserial} (or {\it string}) if it is Morita equivalent to a path algebra $kQ/I$ with $(Q,I)$ a special biserial bound quiver (or a string bound quiver, respectively).

Since Hochschild cohomology is invariant under Morita equivalence, whenever we deal with a string algebra $A$ we will assume that it is given by a string presentation $A = kQ/I$ with $I$ satisfying the previous conditions.  We also assume that the ideal $I$ is generated by paths of minimal length, and we fix a minimal set $\R$ of paths, of minimal length, that generate the ideal $I$.  Moreover, we fix a set $\P$ of paths in $Q$ such that the set $\{ \gamma + I, \gamma \in \P\} $ is a basis of $A=kQ/I$.

\section{Bardzell's resolution}

We recall that the Hochschild cohomology groups ${\HH}^i(A)$ of an algebra $A$ are the groups $\Ext_{A-A}^i(A,A)$.  Since string algebras are monomial algebras, their Hochschild cohomology groups can be computed using a convenient minimal projective resolution of $A$ as $A$-bimodule given in~\cite {B}.

In order to describe this minimal resolution, we need some definitions and notations.

Recall that we have fix a minimal set $\R$ of paths, of minimal length, that generate the ideal $I$.  It is clear that no divisor of an element in $\R$ can belong to $\R$.

The $n$-concatenations are elements defined inductively as follows: given any directed path $T$ in $Q$, consider the set of vertices that are starting and ending points of arrows belonging to $T$, and consider the natural order $<$ in this set.  Let $\R(T)$ be the set of paths in $\R$ that are contained in the directed path $T$. Take $p_1 \in \R(T)$ and consider the set $$L_1=\{ p \in \R(T): s(p_1)  < s(p) < t(p_1)  \}.$$
If $L_1 \not = \emptyset$, let $p_2$ be such that $s(p_2)$ is minimal with respect to all $p \in L_1$. Now assume that $p_1, p_2, \dots, p_j$ have been constructed.  Let
$$L_{j+1}=\{ p \in \R(T): t(p_{j-1}) \leq s(p) < t(p_j) \}.$$
If $L_{j+1} \not = \emptyset$, let $p_{j+1}$ be such that $s(p_{j+1})$ is minimal with respect to all $p \in L_{j+1}$. Thus $(p_1, \dots, p_{n-1})$ is an $n$-concatenation and we denote by $w(p_1, \dots, p_{n-1})$ the path from $s(p_1)$ to $t(p_{n-1})$ along the directed path $T$, and we call it the {\it support} of the concatenation.

These concatenations can be pictured as follows:
\[ \xymatrix{ \ar@<1ex>[rr]^{p_1} & \ar[rrr]_{p_2} &  & \ar@<1ex>[rrr]^{p_3} & & \ar[rrr]_{p_4} & & \ar@<1ex>[rr]^{p_5} & & & ...  } \]
Let $AP_0=Q_0$, $AP_1=Q_1$ and $AP_n$ the set of supports of $n$-concatenations.

\medskip

The construction of the sets $AP_{n}$ can also be done dually.   Given any directed path $T$ in $Q$ take $q_1 \in \R(T)$ and consider the set $$L_1^{op} =\{ q \in \R(T): s(q_1)  < t(q) < t(q_1)  \}.$$
If $L_1^{op} \not = \emptyset$, let $q_2$ be such that $t(q_2)$ is maximal with respect to all $q \in L_1^{op}$. Now assume that $q_1, q_2, \dots, q_j$ have been constructed.  Let
$$L_{j+1}^{op}=\{ q \in \R(T): s(q_{j}) < t(q) \leq s(q_{j-1}) \}.$$
If $L_{j+1}^{op} \not = \emptyset$, let $q_{j+1}$ be such that $t(q_{j+1})$ is maximal with respect to all $q \in L_{j+1}^{op}$.
Thus $(q_{n-1} , \dots, q_1)$ is an $n$-op-concatenation, we denote by $w^{op}(q_{n-1}, \dots, q_1)$ the path from $s(q_{n-1})$ to $t(q_1)$ along the directed path $T$,  we call it the support of the concatenation and we denote by $AP^{op}_{n}$ the set of supports of $n$-op-concatenations constructed in this dual way. Moreover, we denote $w^{op}(q_{n-1}, \dots, q_1)=w^{op}(q^1, \dots, q^{n-1})$.

\medskip

For any $w \in AP_n$ define $\Sub(w)= \{ w' \in AP_{n-1}: w' \ \mbox{divides} \ w\}$.

\begin{example}
Consider the following relations contained in a directed path $T$:
\[ \xymatrix{ \ar@<1ex>[rrr]^{p_1} & \ar@<-1ex>[rrrr]_{p_2} &  \ar[rrrrr]^{p_3} &
& \ar@<2ex>[rrrrr]^{p_4} 
& & \ar@<-1ex>[rrrrrr]_{p_5} & & \ar@<1ex>[rrrrr]^{p_6} & & & \ar[rrr]_{p_7} &&&&
 } \]
Then $w=w(p_1, p_2, p_4, p_5, p_7)$ is a $6$-concatenation,  $w=w^{op}(p_1, p_3, p_4, p_6, p_7)$ and
$$\Sub(w) = \{ w(p_1, p_2, p_4, p_5), w(p_2, p_3, p_5, p_6), w(p_3, p_4, p_6, p_7) \}.$$
\end{example}

\begin{lemma} \cite[Lemma 3.1]{B} \label{opuesto}
If $n \geq 2$ then $AP_{n} = AP^{op}_{n}$.
\end{lemma}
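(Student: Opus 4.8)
\noindent {\it\bf Proof.} The plan is to establish the equality by induction on $n$, after first halving the work with a duality argument. Reversing all the arrows of $Q$ produces the opposite quiver $Q^{op}$; it carries each directed path $T$ to a directed path traversed backwards, interchanges $s$ with $t$, and turns the prescription ``$s(p)$ minimal'' into ``$t(q)$ maximal''. Thus reversal exchanges the forward construction with the op construction, and gives support-reversing bijections between the $n$-concatenations of $Q$ and the $n$-op-concatenations of $Q^{op}$, and between the $n$-op-concatenations of $Q$ and the $n$-concatenations of $Q^{op}$. Consequently it suffices to prove the single inclusion $AP_n \subseteq AP_n^{op}$ for \emph{every} string algebra: applying that inclusion to $Q^{op}$ and translating back through reversal yields $AP_n^{op} \subseteq AP_n$ for $Q$, and the two inclusions give the equality.

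Before the induction I would record the combinatorial picture that makes the two greedy rules comparable. Fix a directed path $T$ and identify the vertices occurring along it with their positions, so that each $p \in \R(T)$ becomes an interval $[s(p),t(p)]$. Since $\R$ is a minimal set of paths of minimal length, no relation is a proper factor of another, so these intervals form an antichain under inclusion; hence $\R(T)$ can be listed as $r_1,\dots,r_m$ with $s(r_1)<\dots<s(r_m)$ \emph{and} $t(r_1)<\dots<t(r_m)$ simultaneously. In this model the forward construction is the greedy rule that, given $p_{j-1}$ and $p_j$, takes for $p_{j+1}$ the relation of smallest source subject to $t(p_{j-1})\le s(p_{j+1})<t(p_j)$, i.e.\ the leftmost relation overlapping $p_j$ that does not begin before $p_{j-1}$ ends; the op construction is the exact mirror image, reading targets from the right.

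For the induction, the base case $n=2$ is immediate: a $2$-concatenation is a single relation and is its own support, so $AP_2=\R=AP_2^{op}$. For the step, let $w\in AP_n$ be the support of a forward concatenation $(p_1,\dots,p_{n-1})$. Because the greedy choice of each $p_{j+1}$ depends only on $p_{j-1},p_j$, the truncation $(p_1,\dots,p_{n-2})$ is again a forward concatenation, so its support $w'$ lies in $AP_{n-1}$; by the inductive hypothesis $w'\in AP_{n-1}^{op}$, that is, $w'$ is spanned by an op-concatenation running from $s(w')=s(w)$ to $t(p_{n-2})$. I would then produce an op-concatenation for $w$ by beginning the op construction at $q_1=p_{n-1}$ (which ends at $t(w)$) and checking, with the antichain listing above, that from its first step onward it reproduces the op-concatenation furnished for $w'$, halting after exactly $n-1$ relations at a relation whose source is $s(w)$. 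Its support is then $w$, whence $w\in AP_n^{op}$.

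The main obstacle is exactly this last matching. The example already shows that the forward and the op chains genuinely use different intermediate relations ($p_2,p_5$ against $p_3,p_6$), so one cannot simply reverse a sequence; what has to be proved is that the leftmost-greedy sweep and the rightmost-greedy sweep always halt after the same number of relations and span the same path, even though their right-hand anchors ($t(p_{n-2})$ for $w'$ versus $t(p_{n-1})$ for $w$) do not coincide. I expect the delicate point to be verifying that the gap inequalities $t(p_{j-1})\le s(p_{j+1})$ are inherited when one passes from the forward chain to the op chain, and this is where the rigidity forced by the minimality of $\R$ --- the antichain property of $\R(T)$, reinforced by the uniqueness of continuations coming from condition S2) --- is indispensable, since it is what keeps the two sweeps locked in step. \qed
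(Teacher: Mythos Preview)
Your proposal is an outline, not a proof, and the place where it stops is precisely the place where all the work lies. You yourself flag this in the last paragraph: the ``matching'' between the op-concatenation for $w$ and the one supplied by the inductive hypothesis for $w'$ is left as an expectation, not established. Worse, the specific matching you propose is generally false. The op-construction for $w$ starts with $q_1=p_{n-1}$ and then chooses $q_2$ with \emph{maximal} target in $(s(p_{n-1}),t(p_{n-1}))$; there is no reason this should be $p_{n-2}$, the rightmost relation of $w'$. Example~1 in the paper already illustrates this: with $w=w(p_1,p_2,p_4,p_5,p_7)$ one has $w=w^{op}(p_1,p_3,p_4,p_6,p_7)$, so the second op-step picks $p_6$, not $p_5$. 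Hence the op-chain for $w$ does \emph{not} reproduce the op-chain for $w'=w(p_1,p_2,p_4,p_5)$ after one step, and your induction collapses.

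The paper's argument avoids this trap by abandoning induction on $n$ and instead running a single right-to-left sweep, proving a chain of sandwich inequalities. Starting from $q^{n-1}=p_{n-1}$, one shows step by step that
\[
s(p_{n-j})\le s(q^{n-j})<t(p_{n-j-1}),\qquad t(p_{n-j})\le t(q^{n-j})<t(p_{n-j+1}),
\]
using at each stage the minimality of $s(p_{n-j+1})$ (from the forward greedy rule) together with the antichain property of $\R(T)$. These inequalities guarantee that the relevant $L_j^{op}$ is nonempty, so $q^{n-j}$ exists, and at the end force $q^1=p_1$. Your antichain observation is exactly the right ingredient, but it has to be fed into this interleaving argument rather than into an induction that tries to reuse the op-chain of a truncation.
\qed
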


\begin{proof}
We will prove that for any $n$-concatenation $(p_1, \dots, p_{n-1})$ there exists a unique $n$-op-concatenation $(q^1, \dots, q^{n-1})$ such that $w(p_1, \dots, p_{n-1}) = w^{op} (q^1, \dots, q^{n-1})$.  The converse statement can be proved similarly. 
First observe that $w(p_1)=w^{op}(p_1)$ and $w(p_1, p_2)=w^{op}(p_1, p_2)$. Assume that $n >3$.  It is clear that $q^{n-1}=p_{n-1}$ since they are relations in $\R$ contained in the same path and sharing target.  When we look for $q^{n-2}$ we can observe that the maximality of its target implies that $t(p_{n-2}) \leq t(q^{n-2})$.  Since elements in $\R$ are paths of minimal length, $s(p_{n-2}) \leq s(q^{n-2})$. Now $t(q^{n-2}) < t(q^{n-1}) = t(p_{n-1})$ says that $q^{n-2} \not = p_{n-1}$ and the minimality of the starting point of $p_{n-1}$ says that $s(q^{n-2}) < t(p_{n-3})$.
Then
\[s(p_{n-2} )\leq  s(q^{n-2} )< t(p_{n-3} )\quad \mbox{and} \quad t(p_{n-2}) \leq t(q^{n-2}) <  t(p_{n-1}).\]
Now we describe $q^{n-3}$:  since $s(q^{n-2}) < t(p_{n-3} ) \leq s(p_{n-1})= s(q^{n-1})$ the maximality of the target of $q^{n-3}$ implies that $t(p_{n-3}) \leq t(q^{n-3})$.  Since elements in $\R$ are paths of minimal length, $s(p_{n-3}) \leq s(q^{n-3})$. Now $t(q^{n-3}) \leq s(q^{n-1}) = s(p_{n-1}) <  t(p_{n-2})$ says that $q^{n-3} \not = p_{n-2}$ and the minimality of the starting point of $p_{n-2}$ says that $s(q^{n-3}) < t(p_{n-4})$.
Then
\[ s(p_{n-3} )\leq  s(q^{n-3} )< t(p_{n-4} )\quad \mbox{and} \quad t(p_{n-3}) \leq t(q^{n-3}) <  t(p_{n-2}).\]
Since $s(q^{n-3}) < t(p_{n-4} ) \leq  s(p_{n-2}) \leq s(q^{n-2})$ we can continue this procedure in order to prove that, for $j=2, 3, \dots, n-2$, the element $q^{n-j}$  is such that
\[s(p_{n-j} )\leq  s(q^{n-j} )< t(p_{n-j-1} ), \quad  t(p_{n-j}) \leq t(q^{n-j}) <  t(p_{n-j+1})\]
and
\[s(q^{n-j}) < t(p_{n-j-1} ) \leq  s(p_{n-j+1}) \leq s(q^{n-j+1}).\]
Finally  the minimality of the source of $p_2$ and the inequality $t(p_1) \leq t(q^1) <  t(p_2)$ shows that $q^1=p_1$. \qed
\end{proof}

\begin{lemma}\label{particion}
If $n, m \geq 0$, $n + m \geq 2$  then any $w(p_1, \dots, p_{n+m-1})  \in AP_{n+m}$ can be written in a unique way as
$$w(p_1, \dots, p_{n+m-1}) = {^{(n)}\! w} \ u \ w^{(m)}$$
with $ {^{(n)}\! w} = w(p_1, \dots, p_{n-1}) \in AP_{n}$, $w^{(m)}= w^{op}(q^{n+1}, \dots, q^{n+m-1}) \in AP^{op}_{m}$ and $u$ a path in $Q$. Moreover, $p_n = a \ u \ b$ and $q^n= a' \ u \ b'$ with $a, a', b, b'$ non trivial paths, and hence $u \in \P$.
\end{lemma}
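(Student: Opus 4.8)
The plan is to cut the $(n+m)$-concatenation at its $n$-th relation, reading the block to the left of the cut forwards and the block to the right backwards. It is convenient to identify the relevant vertices with their positions along the underlying directed path $T$, so that each relation becomes an interval, $p_i=[s(p_i),t(p_i)]$ and $q^i=[s(q^i),t(q^i)]$. By Lemma~\ref{opuesto} the support $w=w(p_1,\dots,p_{n+m-1})$ is also the support of a unique $(n+m)$-op-concatenation $w^{op}(q^1,\dots,q^{n+m-1})$, with $q^1=p_1$ and $q^{n+m-1}=p_{n+m-1}$. I would then set ${}^{(n)}\! w=w(p_1,\dots,p_{n-1})\in AP_n$, take $w^{(m)}=w^{op}(q^{n+1},\dots,q^{n+m-1})$ to be the op-concatenation obtained by truncating the one above to its last $m-1$ relations (a truncation of an op-concatenation is again an op-concatenation, since the selection rule is local), so that $w^{(m)}\in AP^{op}_m$, and let $u$ be the subpath of $w$ running from $t(p_{n-1})$ to $s(q^{n+1})$.

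For existence I would first collect the position inequalities; I treat the main case $n,m\ge 2$. From the forward construction, $p_n\in L_n$ gives $s(p_n)<t(p_{n-1})$ and $p_{n+1}\in L_{n+1}$ gives $t(p_{n-1})\le s(p_{n+1})$, while targets are strictly increasing because no element of $\R$ divides another. From the inequalities established inside the proof of Lemma~\ref{opuesto} (together with the boundary identifications above, which cover the extreme indices when $m=2$), one gets $s(q^n)<t(p_{n-1})$, $t(p_n)\le t(q^n)$ and $s(p_{n+1})\le s(q^{n+1})<t(p_n)$. Combining $t(p_{n-1})\le s(p_{n+1})\le s(q^{n+1})$ shows that $u$ is a genuine, possibly trivial, path; and the three factors occupy the consecutive segments $[s(p_1),t(p_{n-1})]$, $[t(p_{n-1}),s(q^{n+1})]$ and $[s(q^{n+1}),t(q^{n+m-1})]$ of $T$, whose union is $[s(p_1),t(p_{n+m-1})]=w$ because $t(q^{n+m-1})=t(p_{n+m-1})$. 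Hence ${}^{(n)}\! w\,u\,w^{(m)}=w$.

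Next I would read the seam factorisations straight off these inequalities. The chain $s(p_n)<t(p_{n-1})\le s(q^{n+1})<t(p_n)$ exhibits $u$ as a subpath of $p_n$ with non-trivial complements $a=[s(p_n),t(p_{n-1})]$ and $b=[s(q^{n+1}),t(p_n)]$, that is $p_n=a\,u\,b$; likewise $s(q^n)<t(p_{n-1})\le s(q^{n+1})<t(p_n)\le t(q^n)$ yields $q^n=a'\,u\,b'$ with $a',b'$ non-trivial. In particular $u$ is a proper subpath of the relation $p_n\in\R$, so if $u$ contained some $r\in\R$ then $r$ would be a proper divisor of $p_n$, contradicting that no divisor of an element of $\R$ lies in $\R$; therefore $u$ contains no relation, $u\notin I$, and $u\in\P$.

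For uniqueness I would show that in any factorisation $w=X\,u'\,Y$ with $X\in AP_n$ and $Y\in AP^{op}_m$ the outer factors are forced. Since $X$ is a left divisor of $w$ in $AP_n$, its initial relation has source $s(w)=s(p_1)$; along $T$ there is a unique relation of $\R$ with that source (a shorter one would divide it), and the forward selection is deterministic, so $X={}^{(n)}\! w$. Dually, $Y$ is the unique right divisor of $w$ lying in $AP^{op}_m$, namely $w^{(m)}$, and then $u'=u$ is determined. The boundary cases $n\le 1$ or $m\le 1$ are checked directly using $q^1=p_1$ and $q^{n+m-1}=p_{n+m-1}$: one outer factor degenerates to a vertex or an arrow and $u$ becomes an initial or final subpath of the first or last relation. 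The main obstacle is the second paragraph, namely locating the $n$-th op-relation $q^n$ relative to the forward data $p_{n-1},p_n$ and to $q^{n+1}$, and checking that the op-suffix really begins at $s(q^{n+1})$ so that the forward description $p_n=a\,u\,b$ and the backward description $q^n=a'\,u\,b'$ of the seam are compatible; this is exactly what the inequalities extracted from the proof of Lemma~\ref{opuesto} deliver.
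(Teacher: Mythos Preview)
Your proof is correct and follows essentially the same route as the paper's: invoke Lemma~\ref{opuesto} to obtain the op-description $w=w^{op}(q^1,\dots,q^{n+m-1})$, take the obvious truncations for ${}^{(n)}\!w$ and $w^{(m)}$, and extract the key inequality $t(p_{n-1})\le s(q^{n+1})$ together with the seam factorisations of $p_n$ and $q^n$ from the position estimates established in that lemma's proof. The only cosmetic difference is that the paper reaches $t(p_{n-1})\le s(q^{n+1})$ via the chain $t(p_{n-1})\le t(q^{n-1})\le s(q^{n+1})$ whereas you route through $t(p_{n-1})\le s(p_{n+1})\le s(q^{n+1})$; you also supply the uniqueness argument, the explicit reason $u\in\P$, and the boundary cases, which the paper leaves implicit.
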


\begin{proof}     
From Lemma \ref{opuesto} we know that $w(p_1, \dots, p_{n+m-1}) = w^{op}(q^1, \dots, q^{n+m-1})$. It is clear that  $w(p_1, \dots, p_{n-1}) \in AP_{n}$ and $w^{op}(q^{n+1}, \dots, q^{n+m-1}) \in AP^{op}_{m}$.  In order to prove the existence of a path $u$ we just have to observe that the proof of the previous lemma and the definition of concatenations imply that
\[t(p_{n -1}) \leq t(q^{n -1}) \leq s(q^{n +1}).\]
Finally, the relation of $u$ with $p_n$ and $q^n$ follows from the inequalities
\[s(p_n) < t(p_{n-1}) \leq s(q^{n+1}) < t(p_n)\]
and
\[s(q^n) < t(p_{n-1}) \leq s(q^{n+1}) < t(q^n).\] \qed
\end{proof}

Now we want to study the sets $\Sub(w)$ in some particular cases. Observe that for any $w \in AP_n$, $\psi_1 = w^{op}(q^2, \cdots, q^{n-1})$ and $\psi_2 = w(p_1, \cdots, p_{n-2})$  belong to $\Sub(w)$ and $w= L(\psi_1) \psi_1 = \psi_2 R(\psi_2)$.

\begin{lemma}\label{cuadratica}
If $w=w(p_1, \dots, p_{n-1}) \in AP_{n}$ is such that $p_i$ has length two for some $i$ with $1 \leq i \leq n-1$, then $\vert \Sub (w) \vert =2$.
\end{lemma}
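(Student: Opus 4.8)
The plan is to exploit the extreme rigidity that a length-two relation imposes on the overlap pattern. Write $p_i = \alpha\beta$ and let $v$ be the vertex with $t(\alpha)=s(\beta)=v$. The first and central observation is that $p_i$ is the \emph{only} relation of $\R$ that strictly crosses $v$: any $r\in\R$ lying in the directed path $T$ with $s(r) < v < t(r)$ must contain the subpath $\alpha\beta=p_i$, so $p_i$ divides $r$, and minimality of $\R$ forces $r=p_i$. In particular every relation other than $p_i$ has its target at or before $v$, or its source at or after $v$. I would then feed the index $i$ into Lemma~\ref{particion}: writing $w={^{(i)}\! w}\,u\,w^{(n-i)}$ with $p_i=a\,u\,b$ and $a,b$ nontrivial, the equality $\vert p_i\vert=2$ leaves no room for $u$, so $u=e_v$ and the splitting is gapless, $w={^{(i)}\! w}\cdot w^{(n-i)}$, the two factors meeting exactly at $v$ with ${^{(i)}\! w}\in AP_i$ and $w^{(n-i)}\in AP_{n-i}$.

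With this decomposition in hand I would classify every $w'\in\Sub(w)$, an element of $AP_{n-1}$ dividing $w$, according to whether its support crosses $v$. If $w'$ does not cross $v$, then by the bottleneck it avoids $p_i$ and lies inside ${^{(i)}\! w}$ or inside $w^{(n-i)}$; since a factor of a support in $AP_\ell$ has level at most $\ell$, this needs $n-1\le i$ or $n-1\le n-i$, that is $i=n-1$ or $i=1$. For an interior index $2\le i\le n-2$ this case is empty, so $w'$ must contain $p_i$ and hence cross $v$. Then $w'$ itself splits gaplessly at $v$, as a factor of ${^{(i)}\! w}$ ending at $v$ (of some level $a\le i$) followed by a factor of $w^{(n-i)}$ starting at $v$ (of some level $b\le n-i$), with $a+b=n-1$. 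The constraints force $a\in\{i-1,i\}$: the choice $a=i$ keeps ${^{(i)}\! w}$ whole and drops the last relation on the right, producing the prefix $\psi_2=w(p_1,\dots,p_{n-2})$; the choice $a=i-1$ drops the first relation on the left and keeps $w^{(n-i)}$ whole, producing the suffix $\psi_1=w^{op}(q^2,\dots,q^{n-1})$. As these are exactly the two elements already known to lie in $\Sub(w)$, we conclude $\vert\Sub(w)\vert=2$.

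Finally I would dispatch the boundary cases directly. When $n=2$ the relation $w=p_1=\alpha\beta$ has precisely the two proper divisors $\alpha,\beta\in AP_1$, so $\vert\Sub(w)\vert=2$. When $i=1$ (respectively $i=n-1$), the left (respectively right) side of $v$ degenerates to the single arrow $\alpha$ (respectively $\beta$), and the same crossing/level analysis shows the only level-$(n-1)$ factors are the prefix starting at $s(p_1)$ and the suffix $w(p_2,\dots,p_{n-1})$ supported on $[v,t(p_{n-1})]$; the op-symmetry of Lemma~\ref{opuesto} reduces $i=n-1$ to $i=1$. The step I expect to require the most care is the uniqueness used in the interior case: that a factor of ${^{(i)}\! w}$ ending at $v$ is determined by its level, being one of the nested op-suffixes $w^{op}(q^{j},\dots,q^{i-1})$, and symmetrically for factors of $w^{(n-i)}$ starting at $v$. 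This rests on the fact that a support determines its level, together with the greedy description of op-concatenations, and it is exactly what prevents any ``middle'' element of $\Sub(w)$ from surviving.
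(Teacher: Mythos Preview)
Your approach is correct and coincides with the paper's: both arguments hinge on applying Lemma~\ref{particion} at the index $i$, using that $\vert p_i\vert=2$ forces the middle path $u$ to be trivial so that $w$ (and any $\hat w\in\Sub(w)$ containing $p_i$) splits gaplessly at $v$, after which the two halves are pinned down by uniqueness of the (op\nobreakdash-)concatenation ending (resp.\ starting) at $v$. Your write-up is in fact more explicit than the paper's---you spell out the bottleneck property of $v$ and the level inequalities that rule out the non-crossing case, whereas the paper simply asserts that $\hat w$ must contain $p_i$---but the underlying argument is the same.
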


\begin{proof}
Assume that $p_i= \alpha \beta$. If $i=1$, then any $n-1$-concatenation different from $(p_1, \dots, p_{n-2})$ and corresponding to an element in $\Sub(w)$ must correspond to a divisor of $w(p_2, \dots, p_{n-1})$, hence it is equal to $(p_2, \dots, p_{n-1})$.  The proof for $i=n-1$ is similar.
If $1 < i < n-1$ and $\hat w \in \Sub(w)$ then $\hat w$ also contains the quadratic relation $p_i$ and by the previous lemma we have that 
\[w = {^{(i)}\! w}  \ w^{(n-i)}, \ \hat w = {^{(j)}\! \hat w}  \ \hat  w^{(n-1-j)}\]
with $t({^{(i)}\! w}) = t( {^{(j)}\! \hat w})$ and $s( w^{(n-i)}) = s(\hat  w^{(n-1-j)})$.  Then $\hat w$ is ${^{(i)}\! \hat w}  \ \hat  w_1$ or $\hat w_2  \ \hat  w^{(n-i)}$, where $\hat w_1$ is the unique element in $\Sub(  w^{(n-i)})$ sharing source with $w^{(n-i)}$ and  $\hat w_2$ is the unique element in $\Sub({^{(i)}\! w})$ sharing target with ${^{(i)}\! w}$. \qed
\end{proof}

\begin{lemma}\label{p=q}
If $w=w(p_1, \cdots, p_{n-1}) = w^{op}(q^1, \cdots, q^{n-1})$ and $q^{m}$ has length two for some $m$ such that $1 < m < n$ then $q^m=p_m$ and $q^{m-1} = p_{m-1}$.
\end{lemma}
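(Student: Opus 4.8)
The plan is to reduce both equalities to a single elementary criterion: \emph{two elements of $\R$ that lie on the same directed path $T$ and share either their source or their target must coincide.} Indeed, if $p,p'\in\R$ sit on $T$ and have the same target with, say, $s(p)\le s(p')$, then $p'$ is a subpath of $p$; were the inclusion proper, $p'$ would divide $p$, contradicting that no divisor of an element of $\R$ belongs to $\R$. Hence it suffices, for each of $q^m=p_m$ and $q^{m-1}=p_{m-1}$, to show that the two relations have a common target, and all the work is a position count along $T$ using the inequalities already produced in the proof of Lemma \ref{opuesto}.

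First I would recall, for later use, those inequalities: $q^1=p_1$, $q^{n-1}=p_{n-1}$, and for $2\le i\le n-2$
\[ s(p_i)\le s(q^i)<t(p_{i-1}), \qquad t(p_i)\le t(q^i)<t(p_{i+1}). \]
I would also record two facts coming directly from the construction of $(p_1,\dots,p_{n-1})$: consecutive relations overlap, so $s(p_i)<t(p_{i-1})$, and both the sources and the targets strictly increase, in particular $t(p_{i-1})<t(p_i)$ (if $t(p_i)\le t(p_{i-1})$ held, then since the sources increase $p_i$ would be a divisor of $p_{i-1}$, contradicting that no element of $\R$ divides another).

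For the first equality, the case $m=n-1$ is immediate since $q^{n-1}=p_{n-1}$. For $1<m<n-1$ I would use that $q^m$ has length two, i.e. $t(q^m)=s(q^m)+2$. Combining $s(q^m)<t(p_{m-1})$ with $t(p_{m-1})<t(p_m)\le t(q^m)=s(q^m)+2$ squeezes these integers and forces $t(p_m)=t(q^m)$; since $p_m$ and $q^m$ then share a target, the criterion gives $p_m=q^m$. For the second equality I would exploit that now $t(p_m)=t(q^m)=s(q^m)+2=s(p_m)+2$. From the overlap $s(p_m)<t(p_{m-1})$ and the inequality $t(q^{m-1})<t(p_m)=s(p_m)+2$ I obtain $t(q^{m-1})\le s(p_m)+1\le t(p_{m-1})$, while $t(p_{m-1})\le t(q^{m-1})$ gives the reverse; hence $t(p_{m-1})=t(q^{m-1})$ and, again by the common-target criterion, $p_{m-1}=q^{m-1}$.

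The routine but delicate point, where I expect to spend the care, is the index bookkeeping at the two ends of the range, since the inequalities of Lemma \ref{opuesto} are stated only for $2\le i\le n-2$. The boundary values $m=2$ and $m=n-1$ must be handled by invoking the explicit identities $q^1=p_1$ and $q^{n-1}=p_{n-1}$ in place of the missing inequalities (and the smallest values $n=2,3$ checked directly, the first being vacuous). Once this scaffolding is in place, each of the two squeezing arguments is a one-line computation.
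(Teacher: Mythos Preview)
Your argument is correct and follows essentially the same route as the paper: both proofs squeeze the targets using the inequalities established in the proof of Lemma~\ref{opuesto} together with the length-two constraint on $q^m$, and then invoke the fact that two elements of $\R$ on $T$ with the same target must coincide. The only cosmetic differences are that the paper proves $q^{m-1}=p_{m-1}$ first (using that $t(q^{m-1})$ and $s(q^{m+1})$ must land at the unique interior vertex of $q^m=\alpha\beta$) rather than $q^m=p_m$, and that you are more explicit about the boundary indices $m=2$, $m=n-1$.
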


\begin{proof}
Let $q^m= \alpha  \beta $.  In the proof of Lemma \ref{opuesto} we have seen that
\[  s(q^{m} )< t(p_{m-1}) \leq t(q^{m-1}) .\]
Now $s(q^{m} )= s(\alpha)$  and $t(q^{m-1}) = t(\alpha)$, so $t(p_{m-1})  = t(q^{m-1})$ and hence 
$p_{m-1}=q^{m-1}$.  Analogously,
\[  s(q^{m+1} )< t(p_{m}) \leq t(q^{m}) , \]   
$s(q^{m+1}) = s(\beta)$  and $t(q^{m}) = t(\beta)$, so $t(p_{m})   = t(q^{m})$ and hence 
$p_{m}=q^{m}$. \qed
\end{proof}

\medskip

In some results that will be shown in the following sections, we will need a description of right divisors of paths of the form $w u$, for $w$ the support of a concatenation and $u \in \P$.  Their existence depends on each particular case as we show in the following example.

\begin{example}
Let   $w=w(p_1, p_2, p_3, p_4) \in AP_5$, $u \in \P$ with $t(w)=s(u)$.  Observe that the existence of a divisor $\psi \in AP_{n}$, for $n=4,5$ such that 
$w u = L(\psi) \psi$ depends on the existence of appropiate relations.  For instance, if $wu$ is the following path
\[ \xymatrix{ 
\ar@<1ex>[rr]^{p_1} \ar@{.}[d] & \ar@<-1ex>[rrr]_{p_2}  & & \ar@<1ex>[rrrrr]^{p_3} &    & \ar@{.}[d] \ar@<-1ex>[rrrrr]_{p_4} &  \ar@{.}[d] &  & & & \ar@{.}[d] \ar[r]^{u} &  \ar@{.}[d] & & &\\
 & & & &  & \ar@<-1ex>[rrrrr]^{q^3} &  \ar@<-2ex>[rrrrr]_{q^4} & & & & & & } \]
the existence of $\psi = \psi^{op}(q^1, q^2, q^3, q^4)$ depends on the existence of a relation whose ending point is between $s(q^3 )$ and $s(q^4)$.
\end{example}

Part of the following lemma has also been  proved in \cite[Lemma 3.2]{B}.

\begin{lemma} \label{A}
If $n$ is even let $w=w(p_1, \cdots, p_{n-1}) \in AP_n$.
\begin{itemize}
\item [(i)] If $v=v^{op}(q^2, \cdots, q^{n-1}) \in AP_{n-1}$ is such that $w \ a = b \ v  \not \in AP_{n+1}$ with $a, b$ paths  in $Q$, $a \in \P$ then $t(p_1) \leq s(q_2)$, and 
\item [(ii)] If $u=u^{op}(q^1, \cdots, q^{n-1}) \in AP_{n}$ is such that $w \ a = b \ u \not \in AP_{n+1}$ with $a, b$ paths  in $\P$ then there exists $z \in AP_{n+1}$ such that $z$ divides the path $T$ that contains $w$ and $v$ and $t(z)=t(u)$.
\end{itemize}
\end{lemma}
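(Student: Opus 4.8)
The plan is to read both assertions as statements about the relative positions, along the directed path $T$, of the relations defining $w$ and those defining the given divisor, and in each case to derive, from the contrary assumption, a concatenation forbidden either by the hypothesis $w\,a\notin AP_{n+1}$ or by elementary properties of supports. Three tools will be used throughout: Lemma \ref{opuesto}, which lets me pass between the concatenation and op-concatenation descriptions and makes available the source/target inequalities established in its proof; the decomposition of Lemma \ref{particion}; and the fact that, since $a$ and $b$ lie in $\P$, they contain no element of $\R$. In particular, any relation ending at $t(a)$ must begin strictly before $s(a)=t(p_{n-1})$, so the rightmost relation of either divisor straddles the junction between $w$ and $a$; and within the fixed path $T$ there is at most one relation of $\R$ with a prescribed source (otherwise one would divide the other, against minimality), a determinism I would use repeatedly.

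For part (i) I would argue by contradiction, assuming $s(q^2)<t(p_1)$. Since $v=v^{op}(q^2,\dots,q^{n-1})$ is a right divisor of $w\,a$ with source $s(q^2)=t(b)\ge s(p_1)$, this inequality places the leftmost relation $q^2$ of $v$ inside $p_1$; as neither relation divides the other, minimality forces $t(q^2)>t(p_1)$, so $p_1$ and $q^2$ overlap exactly as two consecutive relations of a concatenation do. Prepending $p_1$ to $v$ then produces a concatenation supported on the path from $s(p_1)$ to $t(q^{n-1})=t(a)$, that is, on $w\,a$. Counting relations this support lies in $AP_n$ (in any event in $AP_n$ or $AP_{n+1}$), and either possibility is impossible: an element of $AP_{n+1}$ supported on $w\,a$ contradicts the hypothesis, whereas an element of $AP_n$ supported on $w\,a$ would be a proper right multiple of $w\in AP_n$ sharing its source $s(p_1)$, which by the determinism above forces equal first relations, then equal supports, and hence $a$ trivial. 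Thus $t(p_1)\le s(q^2)$.

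For part (ii), $u=u^{op}(q^1,\dots,q^{n-1})\in AP_n$ is a right divisor of the same length class as $w$. Since $u$ cannot lie inside $a$ (which carries no relation), its source $s(u)=t(b)$ satisfies $t(b)<t(p_{n-1})$, so $w$ and $u$ genuinely overlap along the segment $[\,t(b),t(p_{n-1})\,]$ of $T$. The plan is to manufacture $z$ by running the op-concatenation algorithm for $u$ one step further to the left: the leftmost relation $q^1$ of $u$ starts at $t(b)$, which lies inside $w$, so the relations of $w$ situated to the left of $t(b)$ supply a relation $q^0$ whose target is maximal in the window prescribed after $q^1$, and prepending it yields $z=z^{op}(q^0,q^1,\dots,q^{n-1})\in AP_{n+1}$. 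By construction $z$ is a subpath of $T$ and $t(z)=t(q^{n-1})=t(u)$, as required. The hypothesis $w\,a\notin AP_{n+1}$ enters here to guarantee that this leftward extension stops strictly to the right of $s(p_1)$: were $q^0$ to reach $s(p_1)$, the resulting $(n+1)$-concatenation would be $w\,a$ itself, against the hypothesis.

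In both parts the real obstacle is the same: verifying that the candidate extra relation genuinely meets the maximality-of-target condition defining the relevant window $L^{op}_j$, rather than merely occurring in $T$. I would handle this exactly as in the proof of Lemma \ref{opuesto}, reusing those chains of inequalities but now anchored at the junction $t(p_{n-1})=s(a)$ and exploiting that $a,b\in\P$ carry no relations, so as to pin down the position of the new relation and confirm both its existence and its extremality. The hypothesis that $n$ is even is expected to enter only through the parity bookkeeping that aligns the indices of the $p_i$ with those of the $q^j$ in the decomposition of Lemma \ref{particion}, ensuring the straddling relation sits on the correct side of the junction.
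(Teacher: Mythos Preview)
Your argument for (i) has a real gap at the ``prepending'' step. From $s(p_1)<s(q^2)<t(p_1)<t(q^2)$ you cannot simply adjoin $p_1$ to $v^{op}(q^2,\dots,q^{n-1})$ and obtain a valid (op-)concatenation supported on $w\,a$: to extend $v$ one step to the left in the op sense you need a relation with target in $(s(q^2),\,s(q^3)]$, and you have not shown $t(p_1)\le s(q^3)$; to run the forward construction from $p_1$ you recover $w=w(p_1,\dots,p_{n-1})$ itself, whose support need not be $w\,a$. Moreover, even if you produced an element of $AP_n$ with source $s(p_1)$ and support $w\,a$, your conclusion ``hence $a$ trivial'' is not a contradiction, since $a\in\P$ is allowed to be a trivial path. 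The paper does not argue by contradiction here: it starts at the right end, uses $a\in\P$ and $w\,a\notin AP_{n+1}$ to force $s(q^{n-1})<t(p_{n-2})$, and then propagates inequalities leftward two indices at a time, obtaining
\[
s(p_{n-2j+1})<s(q^{n-2j+1})<t(p_{n-2j}),\qquad q^{n-2j}=p_{n-2j+1}\ \text{or}\ s(p_{n-2j+1})<s(q^{n-2j})<s(q^{n-2j+1}).
\]
The hypothesis that $n$ is even is essential, not mere bookkeeping: it makes the induction terminate at $j$ with $n-2j=2$, yielding $q^2=p_3$ or $s(p_3)<s(q^2)$, and in either case $t(p_1)\le s(q^2)$.

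For (ii) you have the right shape, but the content is exactly the step you leave implicit. One must show the window $\{q\in\R(T): s(q^1)<t(q)\le s(q^2)\}$ is nonempty; the paper does this by observing that $p_1$ lies in it: $b\in\P$ gives $s(q^1)<t(p_1)$, and part (i) applied to $v^{op}(q^2,\dots,q^{n-1})$ gives $t(p_1)\le s(q^2)$. Your remark about $w\,a\notin AP_{n+1}$ forcing $s(q^0)>s(p_1)$ is not needed for the stated conclusion, which only asks for some $z\in AP_{n+1}$ dividing $T$ with $t(z)=t(u)$.
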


\begin{proof}
\begin{itemize}
\item [(i)] The assumption  $a \in \P$ implies that
\[ s(p_{n-1}) < s(q^{n-1}) < t(p_{n-1})\]
and moreover
\[ s(p_{n-1}) < s(q^{n-1}) < t(p_{n-2})\] since otherwise
$w \ a  \in AP_{n+1}$ because $q^{n-1}$ would belong to the set considered in order to choose $p_n$.
Now \[q^{n-2} = p_{n-1} \ \mbox{or} \ t(p_{n-1}) < t(q^{n-2}) \ \mbox{and hence} \ s(p_{n-1}) < s(q^{n-2}) < s(q^{n-1}).\]
Now $q^{n-3}$ is such that $s(p_{n-3}) < s(q^{n-3}) < s(q^{n-1})$. The minimality of $s(p_{n-2})$ says that 
\[  s(p_{n-3}) < s(q^{n-3}) < t(p_{n-4}) .\]
An inductive procedure shows that 
\[ s(p_{n-2j+1}) < s(q^{n-2j+1}) < t(p_{n-2j})\]
and 
\[q^{n-2j} = p_{n-2j+1} \ \mbox{or} \ t(p_{n-2j+1}) < t(q^{n-2j}) \ \mbox{and hence} \ s(p_{n-2j+1}) < s(q^{n-2j}) < s(q^{n-2j+1})\]
for any $j$ such that $1 \leq 2j-1, 2j \leq n-1$.  In particular, since $n$ is even we have that
\[q^2 = p_3 \ \mbox{or} \ s(p_3) < s(q^2) < s(q^3)\]
and hence $t(p_1) \leq s(q^2)$.

\item[(ii)]  In  order to prove the existence of $z$ we have to show that there exists $q^0 \in \R(T)$ such that $z=z^{op}(q^0, q^1, \cdots, q^{n-1})$ belongs to $AP_{n+1}$, that is, we have to see that the set 
$\{ q \in \R(T) : s(q^1) < t(q) \leq s(q^2) \}$
is not empty.  Suppose it is empty.  The assumption  $b \in \P$ implies that $s(q^2) < t(p_1)$, a contradiction from (i).
\end{itemize} \qed 
\end{proof}

\begin{lemma}\cite[Lemma 3.3]{B}  \label{dos}
If $m \geq 1$ and $w \in AP_{2m+1}$ then $\vert \Sub(w)\vert =2$.
\end{lemma}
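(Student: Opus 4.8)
The plan is to show that $\Sub(w)$ consists of exactly the two ``extreme'' divisors already produced in the remark preceding Lemma~\ref{cuadratica}. Writing $w=w(p_1,\dots,p_{2m})=w^{op}(q^1,\dots,q^{2m})$, these are $\psi_2=w(p_1,\dots,p_{2m-1})\in AP_{2m}$, the left divisor sharing the source of $w$, and $\psi_1=w^{op}(q^2,\dots,q^{2m})\in AP_{2m}$, the right divisor sharing the target of $w$; recall $w=\psi_2\,R(\psi_2)=L(\psi_1)\,\psi_1$. First I would check that they are genuinely distinct: $\psi_2$ has source $s(p_1)=s(w)$, whereas the leftmost relation $q^2$ of $\psi_1$ lies strictly to the right of $q^1=p_1$ on the directed path $T$, so $s(\psi_1)=s(q^2)>s(w)$. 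Hence $\vert\Sub(w)\vert\ge 2$, and it remains to prove that every $w''\in\Sub(w)$ equals $\psi_1$ or $\psi_2$.

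I would classify $w''$ by its position inside $w$. Since all elements of $\R$ have minimal length and no element of $\R$ divides another, at most one relation of $\R(T)$ starts at any given vertex and at most one ends at any given vertex; consequently a forward concatenation is determined by its initial relation together with its length, and an op-concatenation by its final relation together with its length. Thus, if $w''$ shares the source of $w$, its first relation is the unique relation of $\R(T)$ with source $s(w)$, namely $p_1$, and therefore $w''=w(p_1,\dots,p_{2m-1})=\psi_2$. Dually, if $w''$ shares the target of $w$, its last relation is forced to be $q^{2m}$ and $w''=w^{op}(q^2,\dots,q^{2m})=\psi_1$.

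It remains to rule out a divisor sharing \emph{neither} endpoint, and this is exactly where the parity $n=2m+1$ enters. Write $w''=w(p''_1,\dots,p''_{2m-1})$ and suppose $s(w'')>s(w)$. By the selection rule for $p_2$ no relation of $\R(T)$ has source strictly between $s(p_1)$ and $s(p_2)$, so $s(p''_1)\ge s(p_2)$. Let $\omega$ be the forward concatenation started afresh at $p_2$ and having the same number $2m-1$ of relations as $w''$. The heart of the argument is the claim that, because $n$ is \emph{odd}, $t(\omega)=t(w)$; a monotonicity property of the greedy construction then gives $t(w'')\ge t(\omega)=t(w)$, and since $w''$ divides $w$ we get $t(w'')=t(w)$, so $w''$ does share the target and hence, by the previous paragraph, $w''=\psi_1$. (When $n$ is even the same restarted concatenation instead stops in the interior of $w$, which is the source of the additional middle divisors appearing in the example with a $6$-concatenation above.) Combining the three cases yields $\Sub(w)=\{\psi_1,\psi_2\}$.

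The main obstacle is the claim $t(\omega)=t(w)$ for odd $n$. I would prove it by the same bookkeeping used in Lemma~\ref{opuesto} and Lemma~\ref{A}(i): restarting the concatenation at $p_2$ shifts the chosen relations by one overlap relative to those of $w$, and one tracks the inequalities $s(p_i)\le s(q^i)<t(p_{i-1})$ and $t(p_i)\le t(q^i)<t(p_{i+1})$ step by step, using minimality of the lengths of the relations at each stage. The parity of the number $2m$ of relations is precisely what makes this shift close up exactly at the right-hand endpoint $t(p_{2m})=t(w)$ rather than in the interior; carrying out this induction cleanly, together with the accompanying monotonicity statement, is the only substantial computation in the proof.
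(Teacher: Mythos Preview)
The paper does not prove this lemma at all; it simply quotes it from Bardzell \cite[Lemma~3.3]{B}. So there is no ``paper's own proof'' to compare against.

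Your outline is the natural one and is in the spirit of the surrounding arguments in the paper (and of Bardzell's original). You correctly isolate the two canonical divisors $\psi_1,\psi_2$, correctly argue that a divisor sharing an endpoint with $w$ is forced to coincide with one of them (uniqueness of the relation in $\R(T)$ with a given source, resp.\ target, is the right reason), and correctly identify the parity of $n=2m+1$ as the mechanism that excludes a genuinely interior divisor. The heart of the matter is indeed your claim that the restarted concatenation $\omega$ beginning at $p_2$ with $2m-1$ relations terminates at $t(w)$; this is exactly the kind of step-by-step inequality tracking carried out in the proofs of Lemma~\ref{opuesto} and Lemma~\ref{A}(i), and the parity is used there in precisely the way you describe. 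The accompanying monotonicity statement (``starting further right with the same number of relations ends no further left'') is also correct but is not entirely trivial and should be justified by the same inductive bookkeeping.

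In short: your approach is correct, and you have accurately located the one substantial computation that remains to be written out. Since the paper defers to \cite{B} for the proof, your sketch actually goes further than the paper itself does; finishing it amounts to reproducing Bardzell's argument.
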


Now we are ready to describe the minimal resolution constructed by Bardzell in \cite{B}:
$$ \dots \longrightarrow A \otimes k AP_n \otimes A \stackrel{d_{n}}{\longrightarrow} A \otimes k AP_{n-1} \otimes A \longrightarrow \dots \longrightarrow  A \otimes k AP_0 \otimes A \stackrel{\mu}{\longrightarrow} A \longrightarrow 0$$
where $kAP_0=kQ_0$, $kAP_1=kQ_1$ and $kAP_{n}$ is the vector space generated by the set of supports of $n$-concatenations and all tensor products are taken over $E=kQ_0$, the subalgebra of $A$ generated by the vertices.  

In order to define the $A$-$A$-maps
\[d_n: A \otimes k AP_{n} \otimes A \to A \otimes k AP_{n-1} \otimes A \] we need the following notations: if $m \geq 1$, for any $w \in AP_{2m+1}$ we have that $\Sub(w) = \{ \psi_1, \psi_2 \}$ where $w= L(\psi_1) \psi_1 = \psi_ 2 R(\psi_2)$; and for any $w \in AP_{2m}$ and $\psi \in \Sub(w)$ we denote $w= L(\psi) \psi R(\psi)$. Then

\begin{eqnarray*}
\mu( 1 \otimes e_i \otimes 1 ) & = & e_i \\
d_1( 1 \otimes \alpha \otimes 1) & = &  \alpha \otimes e_{t(\alpha)} \otimes 1 - 1 \otimes e_{s(\alpha)} \otimes \alpha \\
d_{2m} (1 \otimes w \otimes 1) & = & \sum_{\psi \in \Sub(w)} L(\psi) \otimes \psi \otimes R(\psi) \\
d_{2m+1} (1 \otimes w \otimes 1) & = & L(\psi_1) \otimes \psi_1 \otimes 1 - 1 \otimes \psi_2 \otimes R(\psi_2)
\end{eqnarray*}
The $E-A$ bilinear map $c: A \otimes k AP_{n-1} \otimes A \to A \otimes k AP_{n} \otimes A$ defined by
\[ c( a \otimes \psi \otimes 1) = \sum_{\substack{ w \in AP_n \\ L(w) w R(w) =  a \psi}}  L(w) \otimes w \otimes R(w)\]
is a contracting homotopy, see \cite[Theorem 1]{S} for more details.

\section{Computations}

The Hochschild complex, obtained by applying $\Hom_{A-A} (-, A)$ to the Hochschild resolution we described in the previous section and using the isomorphisms

\[ \Hom_{A-A}( A \otimes k AP_n \otimes ,  A)  \simeq \Hom_{E-E}( k AP_n,  A)\]
is
\[ 0 \longrightarrow \Hom_{E-E} (k AP_0 , A)  \stackrel{F_{1}}{\longrightarrow} \Hom_{E-E}( k AP_1,  A ) \stackrel{F_{2}}{\longrightarrow}   \Hom_{E-E}(  k AP_2, A)   \cdots \]
where

\begin{eqnarray*}
F_1( f) (\alpha)  & = &  \alpha f(e_{t(\alpha)}) - f(e_{s(\alpha)}) \alpha\\
F_{2m}(f) (w)  & = & \sum_{\psi \in \Sub(w)} L(\psi) f(\psi)R(\psi) \\
F_{2m+1} (f) (w)  & = & L(\psi_1) f(\psi_1) - f(\psi_2) R(\psi_2). \\
\end{eqnarray*}
In order to compute its cohomology, we need a handle description of this complex: we will describe explicite basis of these $k$-vector spaces and study the behaviour of the maps between them in order to get information about kernels and images.

\medskip

Recall that we have fixed a set $\P$ of paths in $Q$ such that the set $\{ \gamma + I: \gamma \in \P\} $ is a basis of $A=kQ/I$.
For any subset $X$ of paths in $Q$, we denote $(X// \P)$ the set of
pairs $(\rho, \gamma) \in X \times \P$ such that $\rho, \gamma$ are
parallel paths in $Q$, that is
\[ (X// \P) = \{  (\rho, \gamma) \in X \times \P: s(\rho) = s(\gamma),   t(\rho) =
t(\gamma)\}.\]
Observe that the  $k$-vector spaces  $\Hom_{E-E} (k AP_{n} , A)$ and $k(AP_{n}// \P)$ are isomorphic, and from now on we will identify elements $(\rho, \gamma) \in (AP_{n}// \P)$ with basis elements $f_{(\rho, \gamma)}$ in $\Hom_{E-E} (k AP_{n} , A)$ defined by
\[ f_{(\rho, \gamma)} (w)= \begin{cases}
\gamma & \mbox{if $w=\rho$}, \\
0 & \mbox{otherwise}.
\end{cases}\]
Now we will introduce several subsets of $(AP_{n}// \P)$  in order to get a nice description of the kernel and the image of $F_n$.
For $n=0$ we have that $(AP_0//\P) = (Q_0, Q_0)$.  For $n=1$, $(AP_1// \P) = (Q_1//\P)$, and we consider the following partition
\[(Q_1//\P) = (1,1)_1 \cup (0,0)_1\]
where
\begin{eqnarray*}
(1,1)_1 &=& \{ (\alpha, \alpha) : \alpha \in Q_1 \} \\
(0,0)_1 &=& \{ (\alpha, \gamma)  \in   (Q_1//\P) :  \alpha \not = \gamma \}\\
\end{eqnarray*}
For any $n \geq 2$ let
\begin{eqnarray*}
(0,0)_n &=& \{ (\rho, \gamma) \in (AP_{n} // \P) : \rho= \alpha_1 \hat {\rho} \alpha_2 \ \mbox{and} \ \gamma \not \in \alpha_1 kQ \cup kQ \alpha_2\} \\
 (1,0)_n &=& \{ (\rho, \gamma) \in (AP_{n} // \P) : \rho= \alpha_1 \hat {\rho} \alpha_2 \ \mbox{and} \ \gamma  \in \alpha_1 kQ, \gamma \not \in  kQ \alpha_2\} \\
 (0,1)_n &=& \{ (\rho, \gamma) \in (AP_{n} // \P) : \rho= \alpha_1 \hat {\rho} \alpha_2 \ \mbox{and} \ \gamma \not \in \alpha_1 kQ, \gamma \in  kQ \alpha_2\} \\
(1,1)_n &=& \{ (\rho, \gamma) \in (AP_{n} // \P) : \rho= \alpha_1 \hat {\rho} \alpha_2 \ \mbox{and} \ \gamma \in \alpha_1 kQ \alpha_2\}
\end{eqnarray*}

\bigskip

\begin{remark}\label{uno}
\begin{itemize}
\item [ ]
\item[1)]
These subsets are a partition of $(AP_{n}// \P)$.
\item[2)] Any $(\rho, \gamma) \in (AP_{n}// \P)$ verifies that $\rho$ and $\gamma$ have at most one common first arrow and at most one common last arrow: if $\rho= \alpha_1 \dots \alpha_s \beta \overline{\rho}$ and $\gamma=\alpha_1 \dots \alpha_s \delta \overline{\gamma}$, with $\beta, \delta$ different arrows, then $\alpha_s \beta \in I$.  Since $\alpha_1 \dots \alpha_s$ is a factor of $\gamma$, and $\gamma$ belongs to $\P$, then $\alpha_1 \dots \alpha_s \not \in I$.  But the $n$-concatenation associated to $\rho$ must start with a relation in $\R(T)$, so $s=1$,  this concatenation starts with the relation $\alpha_s \beta$ and the second relation of this concatenation starts in $s(\beta)$.
\item[3)] If $(\rho, \gamma) \in  (1,0)_n$, $\rho = \alpha_1 \overline{\rho}, \gamma = \alpha_1 \overline{\gamma}$ then $\overline{\rho} \in AP_{n-1}$ and $(\overline{\rho},  \overline{\gamma}) \in (0,0)_{n-1}$.  The same construction holds in $(0,1)_n$. Finally, if $(\rho, \gamma) \in  (1,1)_n$, $\rho= \alpha_1 \hat {\rho} \alpha_2, \gamma= \alpha_1 \hat {\gamma} \alpha_2$ then $\hat {\rho} \in AP_{n-2}$ and  $(\hat {\rho},\hat {\gamma}) \in (0,0)_{n-2}$.
\item[4)]  If $(\rho, \gamma) \in (AP_{2}// \P)= (\R // \P)$, we have already seen that $\rho$ and $\gamma$ have at most one common first arrow and at most one common last arrow.  Assume that $\rho= \alpha_1 \alpha_2 \overline \rho$, $\gamma = \alpha_1 \beta \overline \gamma$.  Since $A$ is a string algebra and $\gamma \not \in I$ we have that $\alpha_1 \alpha_2 \in I$ and hence $\rho=\alpha_1 \alpha_2$.  Since we are dealing with triangular algebras, we also have that $\rho$ and $\gamma$ can not have simultaneously one common first arrow and one common last arrow. Then
\begin{eqnarray*}
(1,1)_2 &=& \emptyset \\
(1,0)_2 &=& \{ (\rho, \gamma) \in (\R, \P)  : \rho= \alpha_1  \alpha_2,  \gamma \in \alpha_1  kQ, \gamma \not \in kQ \alpha_2 \} \\
(0,1)_2 &=& \{ (\rho, \gamma) \in (\R, \P)  : \rho= \alpha_1  \alpha_2,  \gamma \not  \in \alpha_1  kQ, \gamma \in kQ \alpha_2 \}.
\end{eqnarray*}
\end{itemize}
\end{remark}
We also have to distinguish elements inside each of the previous sets taking into account the following definitions:

\begin{eqnarray*}
 ^+(X // \P) & = & \{ (\rho, \gamma) \in (X // \P) : Q_1 \gamma \not \subset I  \} \\
 ^-(X // \P) & = & \{ (\rho, \gamma) \in (X // \P) : Q_1 \gamma  \subset I \}
\end{eqnarray*}
In an analogous way we define $(X// \P)^+$,  $(X// \P)^-$,   $^+(X// \P)^+=  ^+(X// \P) \cap  (X// \P)^+$ and so on. \\
Finally we define
\begin{eqnarray*}
(1,0)_n^{--} &=& \{ (\rho, \gamma) \in (1,0)_n^{-}  : \rho= \alpha_1 \hat {\rho} \alpha_2,  \gamma = \alpha_1 \hat {\gamma} , \hat{\gamma} Q_1 \subset I \} \\
(1,0)_n^{-+} &=& \{ (\rho, \gamma) \in (1,0)_n^{-} : \rho= \alpha_1 \hat {\rho} \alpha_2,  \gamma = \alpha_1 \hat {\gamma} , \hat{\gamma} Q_1 \not \subset I \} \\
^{--}(0,1)_n &=& \{ (\rho, \gamma) \in \! ^-(0,1)_n : \rho= \alpha_1 \hat {\rho} \alpha_2,  \gamma =  \hat {\gamma} \alpha_2 , Q_1 \hat{\gamma}  \subset I \} \\
^{+-}(0,1)_n &=& \{ (\rho, \gamma) \in \! ^{-}(0,1)_n : \rho=
\alpha_1 \hat {\rho} \alpha_2,  \gamma =  \hat {\gamma} \alpha_2,
Q_1 \hat{\gamma} \not \subset I \}
\end{eqnarray*}
Now we will describe the morphisms $F_n$ restricted to the subsets we have just defined.

\begin{lemma}\label{nucleo}
For any $n \geq 2$ we have
\begin{itemize}
\item[(a)] $^-(0,0)_{n-1}^- \cup (1,0)_{n-1}^- \cup \ ^-(0,1)_{n-1} \cup  (1,1)_{n-1} \subset \Ker F_n$;
\item[(b)] the function $F_n$ induces a bijection from $^-(0,0)_{n-1}^+$ to $^{--}(0,1)_n$;
\item[(c)] the function $F_n$ induces a bijection from $^+(0,0)_{n-1}^-$ to $(1,0)^{--}_n$;
\item[(d)] there exist bijections $\phi_m: (1,0)_{m}^+ \to {^+(0,1)_{m}}$ and $\psi_m: (1,0)_m^{-+} \to {^{+-}(0,1)_m}$ such that
$$(id +(-1)^{n-1} \phi_{n-1})((1,0)_{n-1}^+) \subset \Ker F_n,$$
$$(-1)^n F_n ((1,0)_{n-1}^+) = (1,1)_n$$
and $$F_n (^+(0,0)^+_{n-1})= (id  + (-1)^{n} \phi_n)((1,0)^+_n) \cup (id  + (-1)^{n} \psi_n)((1,0)^{-+}_n) .$$
\end{itemize}
\end{lemma}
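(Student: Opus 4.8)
The plan is to compute $F_n(f_{(\rho,\gamma)})$ explicitly on each basis element $(\rho,\gamma)\in(AP_{n-1}//\P)$ and to read off the four assertions from the shape of the answer. Choosing $\P$ to be the set of paths not lying in $I$, every product of paths in $A$ is either a single element of $\P$ or zero, so by the differential formulas $F_n(f_{(\rho,\gamma)})=\sum_{w}f_{(w,\,\cdot\,)}$ is supported on those $w\in AP_n$ with $\rho\in\Sub(w)$, the coefficient attached to $w$ being the class of $L(\rho)\gamma R(\rho)$ when $n$ is even, or the signed pair coming from $L(\psi_1)\gamma$ and $-\gamma R(\psi_2)$ when $n$ is odd (where $\Sub(w)=\{\psi_1,\psi_2\}$ by Lemma \ref{dos}). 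The whole computation thus reduces to deciding, for each $w$, whether the replaced path lies in $I$, and this I would translate into the prepend/append conditions encoded by the decorations: by condition (S2) an arrow $\eta$ (resp. $\beta$) with $\eta\gamma\notin I$ (resp. $\gamma\beta\notin I$) is unique when it exists, so the prescript $^{+}/^{-}$ records whether $\gamma$ survives being extended on the left, and the superscript $^{+}/^{-}$ records the same on the right.

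Next I would localize the sum. For a fixed $\rho\in AP_{n-1}$ the only supports $w\in AP_n$ with $\rho\in\Sub(w)$ arise by adjoining one relation on the left (so $\rho=\psi_1$, $R(\rho)=e_{t(\rho)}$) or on the right (so $\rho=\psi_2$, $L(\rho)=e_{s(\rho)}$); triangularity forbids both at once for the same $w$. Using Remark \ref{uno}, the leftmost relation of such a $w$ meets $\rho$ in its first arrow $\alpha_1$ and the rightmost in its last arrow $\alpha_2$, so $L(\rho)\gamma$ (resp. $\gamma R(\rho)$) fails to lie in $I$ exactly when $\gamma$ can be prepended (resp. appended) by the unique compatible arrow \emph{and} the adjoined relation is quadratic, the quadratic case being pinned down by Lemmas \ref{cuadratica} and \ref{p=q}. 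This immediately yields (a): for $^-(0,0)^-_{n-1}$, $(1,0)^-_{n-1}$, $^-(0,1)_{n-1}$ and $(1,1)_{n-1}$ neither the left nor the right replaced path escapes $I$, so $F_n$ kills them. For (b) only the right extension survives, and $(\rho,\gamma)\mapsto(\rho\beta,\gamma\beta)$ with $\beta$ the unique append arrow is a bijection onto $^{--}(0,1)_n$ with inverse ``delete the last arrow''; (c) is the mirror statement via the left extension, landing in $(1,0)^{--}_n$.

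For (d) I would first define $\phi_m$ and $\psi_m$ as the operation that transports the shared-arrow datum of a $(1,0)^+_m$ pair from the front to the back: given $(\rho,\gamma)$ with shared first arrow $\alpha_1$ and append arrow $\beta$, Lemma \ref{A}(ii) furnishes the relation $z\in AP_{m+1}$ realizing the corresponding extension and Lemma \ref{A}(i) prevents any spurious survival, so the assignment is well defined and, together with ``move the arrow back to the front'', is bijective onto $^+(0,1)_m$ (resp. onto $^{+-}(0,1)_m$ on the quadratic locus, which is where the splitting $(1,0)^{-+}_m$ comes from). Then I would evaluate $F_n$ on $^+(0,0)^+_{n-1}$: here both the left and the right extensions survive, producing one $(1,0)^+_n$ term and one $^+(0,1)_n$ term which by construction are $\phi_n$-partners, and bookkeeping the left/right signs of the differential gives exactly $(id+(-1)^n\phi_n)((1,0)^+_n)\cup(id+(-1)^n\psi_n)((1,0)^{-+}_n)$. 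Finally, for $(\rho,\gamma)\in(1,0)^+_{n-1}$ the right extension produces a $(1,1)_n$ element, giving $(-1)^nF_n((1,0)^+_{n-1})=(1,1)_n$, and the crucial point is that $(\rho,\gamma)$ and its $\phi_{n-1}$-partner in $^+(0,1)_{n-1}$ are sent to the \emph{same} $(1,1)_n$ basis element with opposite signs, whence $(id+(-1)^{n-1}\phi_{n-1})((1,0)^+_{n-1})\subset\Ker F_n$.

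The hard part will be this last coincidence: showing that the right extension of a $(1,0)^+$ pair and the left extension of its $\phi$-partner produce literally the same support $w\in AP_n$ and the same replaced path. This is precisely what Lemma \ref{A} secures, part (ii) supplying the relation needed to close up both extensions to a common $w$ and part (i) ruling out the competing configuration; marrying this with the sign conventions of $d_{2m}$ versus $d_{2m+1}$ so that the two contributions cancel rather than add is where the factors $(-1)^n$ and $(-1)^{n-1}$ must be tracked with care, uniformly in the parity of $n$. The quadratic dichotomy separating $^{-+}$ (and $^{+-}$) from the rest, controlled by Lemmas \ref{cuadratica} and \ref{p=q}, is the other bookkeeping point that must be settled before the bijections $\phi_m,\psi_m$ can be declared.
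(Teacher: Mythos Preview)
Your overall strategy matches the paper's: compute $F_n$ on basis elements $f_{(\rho,\gamma)}$ and read off the result from the $^{\pm}$ decorations and Remark~\ref{uno}. However, your ``localization'' step contains a real error. You assert that every $w\in AP_n$ with $\rho\in\Sub(w)$ has either $L(\rho)$ or $R(\rho)$ trivial, and that triangularity forbids both being nontrivial. This is correct when $n$ is odd (Lemma~\ref{dos} gives $|\Sub(w)|=2$, and those two divisors are exactly $\psi_1,\psi_2$), but it is false when $n$ is even: the paper's Example~1 exhibits $w\in AP_6$ with the divisor $w(p_2,p_3,p_5,p_6)\in AP_5$ sitting strictly in the interior, both $L$ and $R$ nontrivial. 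Triangularity has nothing to do with it.

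This does not wreck the argument, but you must actually prove that such interior contributions vanish rather than assume they do not occur. In every case this follows because at least one of $L(\rho)\gamma$ or $\gamma R(\rho)$ lands in $I$: either a $^-$ decoration forces it directly, or (in the $(1,0)$, $(0,1)$, $(1,1)$ cases) Remark~\ref{uno}(2) shows that when $|L(\rho)|>0$ the first relation of $w$ has $\alpha_1$ as its last arrow, so $L(\rho)\alpha_1\in I$ and $\gamma=\alpha_1\hat\gamma$ gives $L(\rho)\gamma\in I$. The paper's proof carries out exactly this case split (for instance, for $(1,0)^-_{n-1}$ it treats $|R(\rho)|>0$ and $w=L(\rho)\rho$ separately). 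You should replace the structural claim about $\Sub(w)$ with this vanishing argument.

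Separately, your plan for (d) is more elaborate than needed. The paper defines $\phi_m$ and $\psi_m$ by a single formula: from $(\alpha\hat\rho,\alpha\hat\gamma)$, strip the shared first arrow and append the unique arrow $\beta$ with $\alpha\hat\gamma\beta\in\P$ (respectively $\hat\gamma\beta\in\P$), obtaining $(\hat\rho\beta,\hat\gamma\beta)$. Lemma~\ref{A} plays no role here; it is invoked only later, in Section~5, when building the comparison maps $f_n$ for the cup product. The cancellation you flag as ``the hard part'' is then immediate once $\phi_m$ is written down: one checks directly that $F_{m+1}$ sends both $f_{(\alpha\hat\rho,\alpha\hat\gamma)}$ and $f_{(\hat\rho\beta,\hat\gamma\beta)}$ to $\pm f_{(\alpha\hat\rho\beta,\alpha\hat\gamma\beta)}$, with the signs differing by $(-1)^{m+1}$.
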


\begin{proof}
\begin{itemize}
\item[(a)] In order to check that $(\rho, \gamma)$ belongs to $\Ker F_n$ we have to prove that for any $w \in AP_{n}$ such that $\rho$ divides $w$, that is, $w= L(\rho) \rho R(\rho)$ and $\vert L(\rho)\vert + \vert R(\rho) \vert > 0$, then
$L(\rho) \gamma R(\rho) \in I$. \\
If $(\rho, \gamma) \in {^-(0,0)_{n-1}^-}$  then $L(\rho) \gamma R(\rho) \in I$.\\
If $(\rho, \gamma) \in (1,0)_{n-1}^-$ then $\gamma R(\rho) \in I$ if $\vert R(\rho) \vert >0$.  On the other hand, if  $w= L(\rho) \rho$ we can deduce that $L(\rho) \gamma \in I$ using Remark \ref{uno} (2): if $L(\rho) \not \in I$ then the first relation in the $n$-concatenation corresponding to $w$ has $\alpha_1$ as it last arrow and $\gamma= \alpha_1 \hat \gamma$.\\
The proof for ${^-(0,1)_{n-1}}$ is analogous. \\
Finally, if $(\rho, \gamma) \in (1,1)_{n-1}$, the statement is clear for $n=2, 3$.  If $n>3$ and  $\rho = \alpha_1 \hat \rho \alpha_2$, from Remark \ref{uno} (2) we get that if $\vert L(\rho) \vert >0$ then the first relation in the  $n$-concatenation corresponding to $w$ has $\alpha_1$ as it last arrow, and if $\vert R(\rho) \vert >0$
then the last relation has $\alpha_2$ as it first arrow.  The assertion is clear since $\gamma= \alpha_1 \hat \gamma \alpha_2$ and hence $L(\rho) \gamma R(\rho)= L(\rho) \alpha_1 \hat \gamma \alpha_2 R(\rho) \in I$.
\item[(b)] If $(\rho, \gamma) \in {^-(0,0)_{n-1}^+}$ there exists a unique arrow $\beta$ such that $\gamma \beta \in \P$.  It is clear that $\rho \beta \in AP_{n}$, $(\rho \beta, \gamma \beta) \in \! {^{--}(0,1)_{n}}$ and $F_n (f_{(\rho, \gamma)})= (-1)^nf_{(\rho \beta, \gamma \beta)}$.
\item[(c)] Analogous to the previous one.
\item[(d)] If $(\alpha \hat \rho, \alpha \hat \gamma) \in (1,0)_m^+$ then there exists a unique arrow $\beta$ such $\alpha \hat \gamma \beta \in \P$.  It is clear that $\hat \rho \in AP_{m-1}$, $(\hat \rho, \hat \gamma) \in {^+(0,0)}_{m-1}^+$, 
$(\hat \rho \beta,\hat \gamma \beta) \in {^+(0,1)_{m}}$ and $(\alpha \hat \rho \beta, \alpha \hat \gamma \beta) \in {(1,1)_{m+1}}$.  The statement is clear if we define
$\phi_m (\alpha \hat \rho, \alpha \hat \gamma) = (\hat \rho \beta,\hat \gamma \beta)$ since
$$F_{m+1}(f_{(\hat \rho \beta,\hat \gamma \beta)}) = f_{(\alpha \hat \rho \beta, \alpha \hat \gamma \beta)} =(-1)^{m+1} F_{m+1} (f_{(\alpha \hat \rho, \alpha \hat \gamma)}).$$
In a similar way we can see that if $(\alpha \hat \rho, \alpha \hat
\gamma) \in (1,0)_m^{-+}$ then there exists a unique arrow $\beta$
such $\hat \gamma \beta \in \P$.  Now we have that $\hat \rho \in
AP_{m-1}$, $(\hat \rho,\hat \gamma ) \in{^+(0,0)_{m-1}^+}$ and $(\hat
\rho \beta,\hat \gamma \beta) \in {^{+-}(0,1)_{m}}$, so  it is enough
to define
$\psi_m (\alpha \hat \rho, \alpha \hat \gamma) = (\hat \rho \beta,\hat \gamma \beta)$.\\
Now if $(\hat \rho,\hat \gamma ) \in{^+(0,0)_{m-1}^+}$ there exist unique arrows $\alpha, \beta$ such that $\alpha \hat \gamma \in \P$ and $\hat \gamma \beta \in \P$.  If $\alpha \hat \gamma \beta \in \P$ then $(\alpha \hat \rho, \alpha \hat \gamma) \in (1,0)_m^+$ and $\phi_m (\alpha \hat \rho, \alpha \hat \gamma) = (\hat \rho \beta,\hat \gamma \beta) \in {^+(0,1)_{m}}$.
If $\alpha \hat \gamma \beta \in I$ then $(\alpha \hat \rho, \alpha \hat \gamma) \in (1,0)_m^{-+}$ and $\psi_m (\alpha \hat \rho, \alpha \hat \gamma) = (\hat \rho \beta,\hat \gamma \beta) \in {^{+-}(0,1)_{m}}$. 
In both cases
\[F_{m}(f_{(\hat \rho,\hat \gamma )}) = f_{(\alpha \hat \rho, \alpha \hat \gamma)} +(-1)^{m} f_{(\hat \rho \beta,\hat \gamma \beta)}.\]
\end{itemize} \qed
\end{proof}

\begin{lemma}
For any $n \geq 2$ we have that
\[\dim \Ker F_n = \vert  {^-(0,0)_{n-1}^-} \vert  +   \vert (1,0)_{n-1} \vert + \vert {^-(0,1)_{n-1}} \vert +  \vert (1,1)_{n-1} \vert \]
and
\[\dim \Im F_n = \vert {^{--}(0,1)_n} \vert + \vert (1,0)_n \vert + \vert (1,1)_n \vert  .\]
\end{lemma}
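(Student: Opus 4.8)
The plan is to exploit the block structure of $F_n$ that is already encoded in Lemma \ref{nucleo}. By Remark \ref{uno}(1) together with the $\pm$-refinements, the basis $(AP_{n-1}//\P)$ of the domain $\Hom_{E-E}(kAP_{n-1},A)$ splits as the disjoint union of the nine blocks
\[{^-(0,0)_{n-1}^-},\ {^-(0,0)_{n-1}^+},\ {^+(0,0)_{n-1}^-},\ {^+(0,0)_{n-1}^+},\ (1,0)_{n-1}^-,\ (1,0)_{n-1}^+,\ {^-(0,1)_{n-1}},\ {^+(0,1)_{n-1}},\ (1,1)_{n-1},\]
and the codomain basis $(AP_n//\P)$ carries the analogous refined partition. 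First I would read off from parts (b), (c), (d) of Lemma \ref{nucleo} that the image of each domain block lands in a region of the codomain basis disjoint from the regions hit by the other blocks: block $^-(0,0)_{n-1}^+$ goes into $^{--}(0,1)_n$, block $^+(0,0)_{n-1}^-$ into $(1,0)_n^{--}$, block $^+(0,0)_{n-1}^+$ into $(1,0)_n^+ \cup (1,0)_n^{-+}\cup {^+(0,1)_n}\cup {^{+-}(0,1)_n}$, while the two blocks $(1,0)_{n-1}^+$ and $^+(0,1)_{n-1}$ both map into $(1,1)_n$; the remaining four blocks lie in $\Ker F_n$ by part (a). Since these target regions are pairwise disjoint coordinate subspaces, $F_n$ is essentially block-diagonal, so $\Ker F_n$ is the direct sum of the kernels of the restrictions and $\Im F_n$ is the direct sum of their images, and it suffices to compute rank and nullity block by block.

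For the image, parts (b) and (c) give two injections onto $^{--}(0,1)_n$ and $(1,0)_n^{--}$, contributing ranks $\vert {^{--}(0,1)_n}\vert$ and $\vert (1,0)_n^{--}\vert$. For block $^+(0,0)_{n-1}^+$, the construction in the proof of (d) sends $(\hat\rho,\hat\gamma)$ to the combination $f_{(\alpha\hat\rho,\alpha\hat\gamma)} + (-1)^n f_{(\hat\rho\beta,\hat\gamma\beta)}$, whose leading term ranges bijectively over $(1,0)_n^+ \cup (1,0)_n^{-+}$; hence the restriction is injective and contributes rank $\vert (1,0)_n^+\vert + \vert (1,0)_n^{-+}\vert$. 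Finally, the identity $(-1)^nF_n((1,0)_{n-1}^+) = (1,1)_n$ shows block $(1,0)_{n-1}^+$ maps bijectively onto $(1,1)_n$, contributing rank $\vert(1,1)_n\vert$. Summing these four disjoint contributions yields $\vert {^{--}(0,1)_n}\vert + \vert (1,0)_n^{--}\vert + \vert (1,0)_n^{-+}\vert + \vert (1,0)_n^+\vert + \vert(1,1)_n\vert$, which is exactly $\vert {^{--}(0,1)_n}\vert + \vert (1,0)_n\vert + \vert (1,1)_n\vert$.

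For the kernel, the four blocks in part (a) contribute their full size, namely $\vert {^-(0,0)_{n-1}^-}\vert + \vert (1,0)_{n-1}^-\vert + \vert {^-(0,1)_{n-1}}\vert + \vert (1,1)_{n-1}\vert$, as pure basis vectors. The main obstacle is the pair of blocks $(1,0)_{n-1}^+$ and $^+(0,1)_{n-1} = \phi_{n-1}((1,0)_{n-1}^+)$, which are \emph{not} separated by the block decomposition since both are carried by $F_n$ into $(1,1)_n$; here I would use part (d) twice. On the one hand $(-1)^nF_n((1,0)_{n-1}^+)=(1,1)_n$ shows $F_n$ is a bijection from $(1,0)_{n-1}^+$ onto $\mathrm{span}\,(1,1)_n$, and on combining this with the kernel relation $(id+(-1)^{n-1}\phi_{n-1})((1,0)_{n-1}^+)\subset\Ker F_n$ one checks that $F_n$ is also a bijection from $^+(0,1)_{n-1}$ onto the same span. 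Thus the restriction of $F_n$ to the $2\vert(1,0)_{n-1}^+\vert$-dimensional block $(1,0)_{n-1}^+\oplus{^+(0,1)_{n-1}}$ has rank $\vert(1,0)_{n-1}^+\vert$ and nullity $\vert(1,0)_{n-1}^+\vert$, the kernel being spanned precisely by the combinations $f_{(\rho,\gamma)}+(-1)^{n-1}f_{\phi_{n-1}(\rho,\gamma)}$. Adding this to the part-(a) contribution and using $\vert(1,0)_{n-1}^-\vert+\vert(1,0)_{n-1}^+\vert=\vert(1,0)_{n-1}\vert$ gives the claimed value of $\dim\Ker F_n$. As a final consistency check one may verify that the two dimensions add up to $\vert(AP_{n-1}//\P)\vert$, in agreement with the rank--nullity theorem.
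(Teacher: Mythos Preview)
Your proof is correct and follows essentially the same approach as the paper: both read off the kernel and image directly from the block structure of $F_n$ established in Lemma~\ref{nucleo}, and then combine terms via $\vert(1,0)_{n-1}\vert = \vert(1,0)_{n-1}^-\vert + \vert(1,0)_{n-1}^+\vert$ and $\vert(1,0)_n\vert = \vert(1,0)_n^{--}\vert + \vert(1,0)_n^{-+}\vert + \vert(1,0)_n^+\vert$. Your version is simply more explicit about why the target regions are disjoint coordinate subspaces and why the $(1,0)_{n-1}^+ \oplus {^+(0,1)_{n-1}}$ block contributes exactly $\vert(1,0)_{n-1}^+\vert$ to both kernel and image, details the paper's two-line proof leaves implicit.
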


\begin{proof}
From the previous lemma we have that
\[\dim \Ker F_n = \vert {^-(0,0)_{n-1}^- } \vert + \vert (1,0)_{n-1}^- \vert + \vert  {^-(0,1)_{n-1}} \vert + \vert  (1,1)_{n-1} \vert + \vert  (1,0)_{n-1}^+ \vert  ,\]
but
\[ \vert (1,0)_{n-1} \vert  = \vert (1,0)_{n-1}^- \vert + \vert (1,0)_{n-1}^+ \vert.\]
Moreover
\[\dim \Im F_n = \vert  ^{--}(0,1)_n \vert + \vert (1,0)^{--}_n \vert  + \vert (1,1)_n \vert + \vert (1,0)_n^+ \vert + \vert (1,0)^{-+}_n) \vert  ,\]
but
\[  \vert (1,0)_{n} \vert  = \vert (1,0)_{n}^{--} \vert + \vert (1,0)_{n}^{-+} \vert  + \vert (1,0)_{n}^+ \vert .\] \qed
\end{proof}

\begin{teo}\label{HH}
If $A$ is a triangular string algebra, then
\[ \dim \HH^n (A) = \begin{cases}
1 & \mbox{if $n=0$} ,\\
\vert Q_1 \vert + \vert  {^-(0,0)^-_1} \vert - \vert Q_0 \vert + 1 &  \mbox{if $n=1$},\\
\vert ^{+-}(0,1)_n \vert + \vert {^-(0,0)^-_n} \vert  & \mbox{if $n \geq 2$.}
\end{cases} \]
\end{teo}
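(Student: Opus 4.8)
The plan is to extract each group from the complex of Section~4 via $\HH^n(A) = \Ker F_{n+1}/\Im F_n$, so that for every $n$ one has $\dim \HH^n(A) = \dim \Ker F_{n+1} - \dim \Im F_n$, and then to feed in the two dimension formulas of the preceding lemma. For $n \geq 2$ this is essentially bookkeeping: substituting
\[\dim \Ker F_{n+1} = |{^-(0,0)_n^-}| + |(1,0)_n| + |{^-(0,1)_n}| + |(1,1)_n|\]
for the kernel and $\dim \Im F_n = |{^{--}(0,1)_n}| + |(1,0)_n| + |(1,1)_n|$ for the image, and then subtracting, the contributions $|(1,0)_n|$ and $|(1,1)_n|$ cancel and leave $|{^-(0,0)_n^-}| + |{^-(0,1)_n}| - |{^{--}(0,1)_n}|$. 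I would then observe that the defining conditions split ${^-(0,1)_n}$ as the disjoint union of ${^{--}(0,1)_n}$ and ${^{+-}(0,1)_n}$, whence $|{^-(0,1)_n}| - |{^{--}(0,1)_n}| = |{^{+-}(0,1)_n}|$, giving the stated value $|{^{+-}(0,1)_n}| + |{^-(0,0)_n^-}|$.

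The two remaining cases are the boundary degrees, which the lemma does not cover, and they must be done by hand. For $n = 0$ one has $\HH^0(A) = \Ker F_1 = Z(A)$; since $A$ is connected and triangular it has no oriented cycles, so the center reduces to the scalars and $\dim \HH^0(A) = 1$. For $n = 1$ I would first compute $\dim \Im F_1$ by rank--nullity: triangularity forces the only paths parallel to a vertex to be the trivial ones, so $(AP_0//\P) = (Q_0,Q_0)$ has exactly $|Q_0|$ elements, and together with $\dim \Ker F_1 = \dim \HH^0(A) = 1$ this gives $\dim \Im F_1 = |Q_0| - 1$. For $\dim \Ker F_2$ I would apply the kernel formula with index $n-1 = 1$, noting that a single arrow has no interior, so in degree one the pieces $(1,0)_1$ and $(0,1)_1$ are empty while $(1,1)_1 = \{(\alpha,\alpha): \alpha \in Q_1\}$ (and $F_2(f_{(\alpha,\alpha)})$ vanishes because every relation in $\R$ lies in $I$); thus $\dim \Ker F_2 = |{^-(0,0)_1^-}| + |Q_1|$. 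Subtracting yields $|Q_1| + |{^-(0,0)_1^-}| - |Q_0| + 1$, as claimed.

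I expect the arithmetic for $n \geq 2$ to be immediate once the previous lemma is granted, so the real care goes into the degree-one boundary behaviour: one has to check that the partition machinery of $(AP_n//\P)$ degenerates correctly when $n = 1$ (single-arrow supports admit no nontrivial interior decomposition $\rho = \alpha_1 \hat\rho \alpha_2$, so only $(0,0)_1$ and $(1,1)_1$ survive), and one must invoke triangularity in two essential spots, namely to identify $\HH^0(A)$ with $k$ and to evaluate $|(AP_0//\P)| = |Q_0|$. These are the only places where the hypothesis that $Q$ has no oriented cycles is genuinely used.
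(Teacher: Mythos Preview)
Your proposal is correct and follows essentially the same route as the paper: the paper also handles $n=0$ via the center, gets $\dim\Im F_1=|Q_0|-1$ by rank--nullity, reads off $\dim\Ker F_2$ from the preceding lemma using $(1,0)_1=(0,1)_1=\emptyset$ and $|(1,1)_1|=|Q_1|$, and for $n\geq 2$ performs exactly the cancellation and the split $|{^-(0,1)_n}|=|{^{--}(0,1)_n}|+|{^{+-}(0,1)_n}|$ that you describe. Your added remarks on why the degree-one partition degenerates and where triangularity enters are accurate elaborations of steps the paper leaves implicit.
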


\begin{proof}
It is clear that $\HH^0 (A) = \Ker F_1$ is the center of $A$, and has dimension $1$ since $A$ is triangular. This implies that \[\dim \Im F_1 = \vert (Q_0 // Q_0) \vert - \dim \Ker F_1 = \vert Q_0 \vert - 1.\]
So
\[\dim \HH^1(A) = \dim \Ker F_2 - \vert Q_0 \vert + 1 = \vert (1,1)_1 \vert + \vert  {^-(0,0)^-_1} \vert - \vert Q_0 \vert + 1\]
since $(1,0)_1 = \emptyset = (0,1)_1$, and $\vert (1,1)_1 \vert = \vert Q_1 \vert$. 
 Finally for $n \geq 2$
\begin{eqnarray*}
\dim \HH^n(A) & =  & \dim \Ker F_{n+1} - \dim \Im F_n \\
& = & \vert (1,1)_{n} \vert +   \vert (1,0)_{n} \vert + \vert {^-(0,1)_{n}} \vert + \vert  {^-(0,0)_{n}^-} \vert - \vert (1,0)_n \vert - \vert (1,1)_n \vert - \vert {^{--}(0,1)_n}\vert \\
& = & \vert ^{+-}(0,1)_n \vert + \vert {^-(0,0)^-_n} \vert
\end{eqnarray*} \qed
\end{proof}

The following corollary includes the subclass of gente algebras, that is, string algebras $A=kQ/I$ such that $I$ is generated by quadratic relations and for any arrow $\alpha \in Q$ there is at most one arrow $\beta$ and at most one arrow $\gamma$ such that $\alpha \beta \in I$ and $\gamma \alpha \in I$.

\begin{coro}
If $A$ is a triangular quadratic algebra, then 
\[ \dim \HH^n (A) = \begin{cases}
1 & \mbox{if $n=0$}, \\
\vert Q_1 \vert + \vert  {^-(0,0)^-_1} \vert - \vert Q_0 \vert + 1 &  \mbox{if $n=1$},\\
\vert {^-(0,0)^-_n} \vert & \mbox{if $n \geq 2$.}
\end{cases} \]
\end{coro}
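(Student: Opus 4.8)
The plan is to obtain this as a direct specialization of Theorem~\ref{HH}. A triangular quadratic algebra is in particular a triangular string algebra, so Theorem~\ref{HH} applies verbatim, and its formulas for $n=0$ and $n=1$ already coincide with the ones claimed here. Thus the whole statement reduces to showing that, when $I$ is generated by paths of length two, the extra term occurring for $n\geq 2$ vanishes, that is ${}^{+-}(0,1)_n=\emptyset$. Once this is established, substituting into the formula $\dim \HH^n(A)=\vert {}^{+-}(0,1)_n\vert+\vert {}^{-}(0,0)^{-}_n\vert$ of Theorem~\ref{HH} leaves exactly $\vert {}^{-}(0,0)^{-}_n\vert$, and the corollary follows.

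To prove ${}^{+-}(0,1)_n=\emptyset$ I would argue by contradiction, taking a hypothetical pair $(\rho,\gamma)$ in it and writing $\rho=\alpha_1\hat\rho\alpha_2$, $\gamma=\hat\gamma\alpha_2$ as in the definitions, so that $Q_1\gamma\subset I$ (the right superscript) while $Q_1\hat\gamma\not\subset I$ (the left superscript). Since every relation of $\R$ now has length two, I would first record the description of $AP_n$ in the quadratic case: each support $\rho\in AP_n$ is a path $\alpha_1\cdots\alpha_n$ of length $n$ all of whose consecutive pairs $\alpha_i\alpha_{i+1}$ are relations. The hypothesis $Q_1\hat\gamma\not\subset I$ then provides an arrow $\delta$ with $\delta\hat\gamma\notin I$, while $Q_1\gamma\subset I$ forces $\delta\gamma\in I$. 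Because $A$ is monomial with quadratic relations and $\gamma\in\P$ (hence $\gamma\notin I$), the only way $\delta\gamma=\delta\hat\gamma\alpha_2$ can lie in $I$ is through a length-two relation at its very front, namely $\delta\beta_1\in\R$ with $\beta_1$ the first arrow of $\gamma$; but $\beta_1$ is also the first arrow of $\hat\gamma$, so $\delta\hat\gamma$ begins with $\delta\beta_1$ and hence lies in $I$, contradicting $\delta\hat\gamma\notin I$.

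The single point that needs care, and which I expect to be the main (though mild) obstacle, is ruling out the degenerate case in which $\hat\gamma$ is the trivial path: if $\hat\gamma=e_{s(\gamma)}$ the front arrow $\beta_1$ would not exist and the contradiction above would collapse. Here triangularity is exactly what saves the argument. Indeed $\gamma$ and $\rho$ are parallel and $\rho$ has length $n\geq 2$; in an acyclic quiver the vertices traversed by $\rho$ are pairwise distinct, so $s(\rho)\neq s(\alpha_2)$, and therefore $\gamma$ cannot reduce to the single arrow $\alpha_2$. Hence $\hat\gamma$ is genuinely nontrivial, its first arrow $\beta_1$ exists, the contradiction goes through, and we conclude ${}^{+-}(0,1)_n=\emptyset$, which together with Theorem~\ref{HH} yields the stated dimensions.
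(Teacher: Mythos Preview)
Your approach is exactly the one the paper intends: the corollary is stated there without proof as an immediate specialization of Theorem~\ref{HH}, and you have correctly supplied the missing verification that ${}^{+-}(0,1)_n=\emptyset$ in the quadratic case, including the acyclicity argument ruling out a trivial $\hat\gamma$. One small imprecision worth fixing: a triangular quadratic monomial algebra is not automatically a string algebra, so your opening sentence should retain the ambient string hypothesis that the paper carries throughout rather than claim that ``quadratic implies string.''
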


\begin{coro}\cite[Theorem 5.1]{ABL}
If $A$ is a triangular string algebra, the following conditions are equivalent:
\begin{itemize}
\item [i)] $\HH^1(A)=0$;
\item [ii)] the quiver $Q$ is a tree;
\item [iii)] $\HH^i(A)=0$ for $i>0$;
\item [iv)] $A$ is simply connected.
\end{itemize}
\end{coro}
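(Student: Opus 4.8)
The plan is to read off the equivalences (i)$\Leftrightarrow$(ii)$\Leftrightarrow$(iii) directly from the dimension formula in Theorem \ref{HH}, and to treat the topological condition (iv) separately. I would first record two elementary facts about the quiver. The first is that, for a connected quiver, the integer $|Q_1| - |Q_0| + 1$ is the first Betti number of the underlying graph; it is always non-negative and vanishes exactly when $Q$ is a tree. The second, which carries the combinatorial weight, is that when $Q$ is a tree there is at most one directed path between any two vertices: a directed path cannot backtrack in its underlying walk (a backtrack would require two arrows between the same pair of vertices, hence a double edge, contradicting the tree hypothesis), so its underlying walk is reduced and must be the unique reduced walk joining its endpoints, which then forces the path. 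Consequently a tree admits no pair of distinct parallel paths.

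For (i)$\Rightarrow$(ii) I would use the formula $\dim \HH^1(A) = |Q_1| + |{^-(0,0)^-_1}| - |Q_0| + 1$. Setting this to $0$ and using $|{^-(0,0)^-_1}| \geq 0$ forces $|Q_1| - |Q_0| + 1 \leq 0$; since this quantity is the first Betti number and hence $\geq 0$, it equals $0$ and $Q$ is a tree. For (ii)$\Rightarrow$(iii) I would argue that, for $n \geq 2$, every $\rho \in AP_n$ contains a relation and so lies in $I$, while any parallel $\gamma \in \P$ lies outside $I$; thus $\rho \neq \gamma$, and the uniqueness of directed paths rules out any such parallel pair, giving $(AP_n // \P) = \emptyset$. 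All the subsets appearing in Theorem \ref{HH} for $n \geq 2$ are then empty, so $\HH^n(A) = 0$; the same emptiness of ${^-(0,0)^-_1}$ together with $|Q_1| - |Q_0| + 1 = 0$ kills $\HH^1$. The implication (iii)$\Rightarrow$(i) is trivial, closing the cycle.

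For the equivalence (ii)$\Leftrightarrow$(iv) I would use that $A$ is monomial: since its relations are single paths they induce no non-trivial homotopy between walks, so the fundamental group of the bound quiver coincides with that of the underlying graph, a free group of rank $|Q_1| - |Q_0| + 1$. Hence $A$ is simply connected precisely when this rank is zero, i.e. when $Q$ is a tree; alternatively one simply invokes \cite[Theorem 5.1]{ABL}. I expect this last step to be the main obstacle, since $\HH^1(A) = 0$ and simple connectedness are \emph{not} equivalent for general algebras, and making the implication rigorous requires either the fundamental-group computation for monomial presentations (with care about its independence of the chosen presentation) or an appeal to the cited result.
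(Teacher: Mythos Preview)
Your proposal is correct and follows essentially the same strategy as the paper: the paper also deduces (i)$\Rightarrow$(ii) from the formula $\dim\HH^1(A)=|Q_1|+|{^-(0,0)^-_1}|-|Q_0|+1$ together with $|{^-(0,0)^-_1}|\geq 0$, invokes the well-known fact that a monomial algebra is simply connected iff $Q$ is a tree for (ii)$\Leftrightarrow$(iv), and dismisses the remaining implications as ``clear''. Your write-up simply spells out those remaining steps---the unique-path argument giving $(AP_n//\P)=\emptyset$ for $n\geq 2$ and the vanishing of ${^-(0,0)^-_1}$---which the paper leaves implicit.
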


\begin{proof}
It is well known that for monomial algebras, $A$ is simply connected if and only if $Q$ is a tree.
If $\HH^1(A)=0$, observing that $\vert  {^-(0,0)^-_1} \vert  \geq 0$ we have that $\vert Q_1 \vert  - \vert Q_0 \vert + 1 = 0$   .  Then the quiver $Q$ is a tree.  All the other implications are clear. \qed
\end{proof}

\begin{example}
Let $A_n= kQ/I$ with 
\[Q:  \xymatrix{ 
0 \ar@<1ex>[r]^{\alpha_1} \ar@<-1ex>[r]_{\beta_1} & 1 \ar@<1ex>[r]^{\alpha_2} \ar@<-1ex>[r]_{\beta_2} & 2 & \cdots & n-1 \ar@<1ex>[r]^{\alpha_n} \ar@<-1ex>[r]_{\beta_n} & n }\]
and $I = < \alpha_{i} \alpha_{i+1} , \beta_{i} \beta_{i+1} >_{\{ i=1, \cdots, n-1\} }$, $n \geq 1$.
Then
\[ dim \HH^i(A_1) = \begin{cases}
1 & \mbox{if $i=0$}, \\
3 & \mbox{if $i=1$},  \\
0 & \mbox{otherwise},\\
\end{cases} ,\qquad 
dim \HH^i(A_{2m}) = \begin{cases}
1 & \mbox{if $i=0$}, \\
2m & \mbox{if $i=1$}, \\
0 & \mbox{otherwise}\\
\end{cases}\]
and
\[
dim \HH^i(A_{2m+1}) = \begin{cases}
1 & \mbox{if $i=0$}, \\
2m+1 & \mbox{if $i=1$}, \\
2 & \mbox{if $i=2m+1$}, \\
0 & \mbox{otherwise}.\\
\end{cases}
\]
\end{example}

\section{Ring structure}

It is well known that the Hochschild cohomology goups $\HH^i(A)$ can be identified with the groups $\Ext_{A-A}^i(A,A)$, so the Yoneda product defines a product in the Hochschild cohomology $\sum_{i \geq 0} \HH^i(A)$ that coincides with the cup product as defined in \cite{G1,G2}.

Given $[f] \in \HH^m(A)$ and $[g] \in \HH^n(A)$, the cup product $[g \cup f] \in \HH^{n+m}(A)$ can be defined as follows: $g \cup f = g f_n$ where $f_n$ is a morphism making the following diagram commutative
\[ \xymatrix{ 
 A \otimes k AP_{m+n}\otimes A \ar[r]^-{d_{m+n}} \ar[d]^{f_n} &  A \otimes k AP_{m+n-1} \otimes A  \ar[r]^-{d_{m+n-1}}  \ar[d]^{f_{n-1}} & \dots \ar[r]^-{d_{m+1}}  & A \otimes k AP_m \otimes A \ar[d]^{f_0} \ar[rd]^f &  \\
  A \otimes k AP_{n} \otimes A  \ar[r]^-{d_{n}} \ar[d]^g &  A \otimes k AP_{n-1} \otimes A  \ar[r]^-{d_{n-1}}  & \dots   \ar[r] & A \otimes k AP_0  \otimes A \ar[r]^-\mu  & A \ar[r] & 0  \\
  A } \] 

In particular we are interested in maps $f \in  \Hom_{A-A}( A\otimes k AP_{m} \otimes A,  A )$ such that the associated morphism $\hat f \in  \Hom_{E-E}(  k AP_{m},  A )$ defined by $\hat f( w) = f (1 \otimes w \otimes 1)$ is in the kernel of the morphism $F_{m+1}:  \Hom_{E-E}( k AP_{m},  A )  \to  \Hom_{E-E}(  k AP_{m+1}, A) $ appearing in the Hochschild complex.

For any $m>0$ we will use Lemma \ref{particion} in order to the define maps $f_n$ that complete the previous diagram in a commutative way.  Recall that if $n>0$ any $w=w(p_1, \dots, p_{m+n-1})  \in AP_{m+n}$ can be written in a unique way as
\[w = {^{(n)}\! w} \ u \ w^{(m)}\]
with ${^{(n)}\! w} =w(p_1, \dots, p_{n-1}) \in AP_{n}$ and $w^{(m)}= \ w^{op}(q^{n+1}, \dots, q^{n+m-1}) \in AP_m$. Let 
\[ f_n: A \otimes k AP_{n+m} \otimes A \to A \otimes k AP_{n} \otimes A\]
 be defined by
\[f_n(1 \otimes w \otimes 1) =
\begin{cases}
1 \otimes 1 \otimes \hat f (w) & \mbox{if $n=0$},\\
 \sum_{\substack{\psi \in AP_{n} \\ L(\psi) \psi R(\psi) =   {^{(n)}\! w}   u }}  L(\psi) \otimes \psi \otimes R(\psi) \hat f(w^{(m)}) & \mbox{if $n>0$} .
\end{cases} \]

\begin{remark}\label{f par}
From Lemma \ref{particion} and Lemma \ref{dos} we can deduce that if $n$ is even then 
\[ f_n(1 \otimes w \otimes 1) = 1 \otimes {^{(n)}\! w}  \otimes u \  \hat f(w^{(m)}) \]
because $w(p_1, \cdots, p_n)=  {^{(n)}\! w} \   u \ b$ and hence 
$$\psi \in \Sub(w(p_1, \cdots, p_n))=\{ \psi_1, \psi_2=w(p_1, \dots, p_{n-1}) \}.$$
But $b$ is a non trivial path,  then $L(\psi) \psi R(\psi) =  {^{(n)}\! w} \   u$ implies that 
$\psi= \psi_2= {^{(n)}\! w}$.
\end{remark}

\begin{prop} Let $m>0$ and let $f \in \Hom_{A-A}( A\otimes k AP_{m} \otimes A,  A )$ be such that $\hat f \in \Ker F_{m+1}$.  Then $f= \mu f_0$ and $f_{n-1} d_{m+n} = d_n f_n$ for any $n \geq 1$.
\end{prop}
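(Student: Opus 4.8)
The plan is to check both identities on the $A$-bimodule generators $1 \otimes w \otimes 1$, $w \in AP_k$, of the free bimodules $A \otimes k AP_k \otimes A$. Since $f$, the maps $f_n$, the differentials $d_k$ and $\mu$ are all morphisms of $A$-bimodules, this reduction suffices. The identity $f = \mu f_0$ is then immediate: for $w \in AP_m$, writing the forced middle idempotent as $e_{s(w)}$, we get
\[ \mu f_0(1 \otimes w \otimes 1) = \mu(1 \otimes e_{s(w)} \otimes \hat f(w)) = e_{s(w)}\hat f(w) = \hat f(w) = f(1 \otimes w \otimes 1). \]

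For the chain-map identities $f_{n-1} d_{m+n} = d_n f_n$ I would fix $w \in AP_{m+n}$ and use Lemma \ref{particion} to write it uniquely as $w = {^{(n)}\! w} \ u \ w^{(m)}$, with ${^{(n)}\! w} \in AP_n$, $w^{(m)} \in AP_m$, $u \in \P$ and central relation $p_n = a \, u \, b$. The verification splits according to the parity of $n$ (which fixes the shapes of $f_n$ and $d_n$) and of $m$ (which fixes the shape of $d_{m+n}$). When $n$ is even, Remark \ref{f par} collapses $f_n$ to the single term $1 \otimes {^{(n)}\! w} \otimes u\, \hat f(w^{(m)})$, so that $d_n f_n(1 \otimes w \otimes 1) = \sum_{\phi \in \Sub({^{(n)}\! w})} L(\phi) \otimes \phi \otimes R(\phi)\, u\, \hat f(w^{(m)})$. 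On the other side $d_{m+n}$ yields a sum (if $m$ is even) or a two-term difference (if $m$ is odd) indexed by $\Sub(w)$, to which the genuine sum $f_{n-1}$ ($n-1$ odd) is applied. When $n$ is odd the roles reverse: here $f_n = \sum_{L(\psi)\psi R(\psi) = {^{(n)}\! w}\, u} L(\psi) \otimes \psi \otimes R(\psi)\, \hat f(w^{(m)})$ is the nontrivial sum, and the two-term $d_n$ makes it telescope, leaving only its extreme terms, which must be matched against $f_{n-1} d_{m+n}$ with $f_{n-1}$ now a single term.

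The core of the argument, in every case, is to compare the decompositions ${^{(n-1)}\! \chi}\, u_\chi\, \chi^{(m)}$ of the elements $\chi \in \Sub(w)$ (again by Lemma \ref{particion}) with the triple $({^{(n)}\! w}, u, w^{(m)})$. One shows that the $\chi \in \Sub(w)$ fall into those obtained by deleting a relation inside the left block — for which $\chi^{(m)} = w^{(m)}$, so that applying $f_{n-1}$ and summing reproduces exactly the terms of $d_n f_n$ — those touching the right block $w^{(m)}$, whose $f_{n-1}$-contributions telescope away (resp. are cancelled by $d_n$), and a single boundary element supported over $u$. Matching this last crossover term is where the hypothesis $\hat f \in \Ker F_{m+1}$ enters: the remaining defect between the two composites is precisely $F_{m+1}(\hat f)$ evaluated on the $(m+1)$-concatenation built from $u$ and $w^{(m)}$, hence zero. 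For example, in the case $n=1$, $m$ even, with $\Sub(w) = \{\Psi_1, \Psi_2\}$ and $\Psi_1 = w^{(m)}$, the two composites agree up to the term $F_{m+1}(\hat f)(w) = L(\Psi_1)\hat f(\Psi_1) - \hat f(\Psi_2) R(\Psi_2)$, whose vanishing forces $\hat f(\Psi_2) R(\Psi_2) = {^{(1)}\! w}\, u\, \hat f(w^{(m)})$, which is exactly the missing identity.

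I expect the main obstacle to be not any single identity but the combinatorial bookkeeping of $\Sub(w)$ under the decomposition of Lemma \ref{particion}: establishing the clean threefold splitting above and verifying that the homotopy-type telescoping built into $f_{n-1}$ and into the two-term differentials cancels everything except the inherited terms and the single cocycle contribution. Lemmas \ref{cuadratica}, \ref{p=q}, \ref{dos} and \ref{A}, which control $\Sub$ and the behaviour of relations across the cut at $u$, are the tools that keep this bookkeeping finite and explicit, and the cocycle hypothesis is needed exactly once per case, precisely at the boundary term.
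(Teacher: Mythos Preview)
Your plan is \emph{not} how the paper proceeds. The paper does not split by the parity of $n$ and $m$, nor does it try to organise $\Sub(w)$ into ``left block / right block / boundary'' pieces and invoke the cocycle condition once at the boundary. Instead it first invokes Lemma~\ref{nucleo} to write $\hat f$ as a linear combination of basis elements of the five types
\[
{^-(0,0)_m^-},\quad (1,0)_m^-,\quad {^-(0,1)_m},\quad (1,1)_m,\quad (id+(-1)^m\phi_m)\bigl((1,0)_m^+\bigr),
\]
and then verifies $f_{n-1}d_{m+n}=d_nf_n$ separately for $\hat f=f_{(\rho,\gamma)}$ in each of these five classes. The point is that each class carries \emph{extra} structural information that is strictly stronger than the bare cocycle equation: for ${^-(0,0)_m^-}$ one has $Q_1\gamma\subset I$ and $\gamma Q_1\subset I$, which kills almost all summands outright; for the classes $(1,0)_m^-$, $(1,1)_m$, and the last one, $\rho$ begins with a quadratic relation, so Lemmas~\ref{cuadratica} and~\ref{p=q} force $\vert\Sub(w)\vert=2$ and pin down $p_{n+1}=q^{n+1}$; and so on. The cocycle condition $F_{m+1}(\hat f)=0$ is never appealed to as such --- what is used is the explicit description of $\Ker F_{m+1}$.

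Your uniform scheme runs into a concrete difficulty at the step ``those touching the right block $w^{(m)}$, whose $f_{n-1}$-contributions telescope away''. For $\chi\in\Sub(w)$ with $\chi^{(m)}\neq w^{(m)}$, the term $f_{n-1}(\cdots\chi\cdots)$ involves $\hat f(\chi^{(m)})$, a value of $\hat f$ at a \emph{different} element of $AP_m$; there is no telescoping mechanism relating $\hat f(\chi^{(m)})$ to $\hat f(w^{(m)})$ for a general cocycle. Likewise, for $n>1$ your ``$(m+1)$-concatenation built from $u$ and $w^{(m)}$'' is not canonically available: Lemma~\ref{particion} gives $q^n=a'ub'$, but nothing guarantees that $u\,w^{(m)}$ or any fixed extension of it lies in $AP_{m+1}$, and even when such elements exist there may be several, each contributing its own cocycle equation. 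So the claim that the cocycle hypothesis is needed ``exactly once per case, precisely at the boundary term'' is not supported; the paper sidesteps exactly this problem by trading the single hypothesis $\hat f\in\Ker F_{m+1}$ for the five-way structural decomposition, in each branch of which the offending terms vanish individually rather than by a global cancellation.
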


\begin{proof}
It is clear that $f= \mu f_0$ since for any $w \in AP_{m}$ we have that
\[ \mu f_0 (1 \otimes w \otimes 1) = \mu (1 \otimes 1 \otimes \hat f(w)) =\hat f(w) = f (1 \otimes w \otimes 1) . \]  Let $n \geq 1$ and let $w \in AP_{n+m}$. By Lemma \ref{nucleo} we have that $\hat f$ is a linear combination of basis elements in
$^-(0,0)_m^-$, $(1,0)_m^-$, $^-(0,1)_m $, $ (1,1)_m$ and $(id +(-1)^m \phi_{m})((1,0)_m^+$. The proof will be done in several steps considering $\hat f=f_{(\rho, \gamma)}$ with $(\rho, \gamma)$ belonging to each one of the previous sets.
\begin{itemize}
\item [(i)] Assume $(\rho, \gamma) \in {^-(0,0)_m^-}$. Using that $Q_1 \hat f(w^{(m)} )\subset I$ we have that
\[ f_n(1 \otimes w \otimes 1) =  \sum_{\substack{\psi \in AP_{n} \\  L(\psi) \psi R(\psi) =  {^{(n)}\! w}   u }}  L(\psi) \otimes \psi \otimes R(\psi) \hat f(w^{(m)}) =  L(\psi) \otimes \psi \otimes \hat f(w^{(m)}) \]
if there exists $\psi \in AP_{n}$ such that $L(\psi) \psi  =   {^{(n)}\! w}   u$ and zero otherwise.  In the first case
\begin{eqnarray*}
d_n f_n(1 \otimes w \otimes 1) & = &  \sum_{\substack{\phi \in AP_{n-1} \\  L(\phi) \phi R(\phi) = \psi} } L(\psi) L(\phi) \otimes \phi \otimes R(\phi)  \hat f(w^{(m)}) \\
 & = &   L(\psi) L(\phi) \otimes \phi\otimes \hat f(w^{(m)})
\end{eqnarray*}
for $\phi \in AP_{n-1}$ such that $L(\psi)  L(\phi) \phi = L(\psi)  \psi=  {^{(n)}\! w}   u$.
On the other hand, if $n+m$ is even,
\[f_{n-1} d_{n+m} (1 \otimes w \otimes 1) = f_{n-1} \left (\sum_{\substack{\psi'  \in \Sub(w)}} L(\psi' ) \otimes \psi'  \otimes R(\psi' ) \right ) =  L(\psi' ) f_{n-1} (1 \otimes \psi' \otimes 1) \]
with $\psi'$ such that $L(\psi') \psi' = w$ since $\hat f(\psi'^{(m)} )R(\psi') \subset \hat f(\psi'^{(m)} ) Q_1 \subset I$ for any $\psi'$ such that $\vert R(\psi') \vert >0$.   In case $n+m$ is odd we get the same final result.  Now  $Q_1 \hat f(\psi'^{(m)} )\subset I$ and $\psi'= {^{(n-1)}\!\psi'} u w^{(m)}$, then
$$L(\psi') f_{n-1} (1 \otimes \psi' \otimes 1) =   L(\psi') L(\phi') \otimes \phi' \otimes \hat f(w^{(m)})$$
if there exists $\phi' \in AP_{n-1} $ such that $L(\psi') L(\phi' ) \phi'  =  L(\psi') {^{(n-1)}\! \psi'}   u = {^{(n)}\! w}   u$ and zero otherwise.

The desired equality holds because: if $\psi$ and $\phi'$ do not exist, both terms vanish. 
If $\psi$ exists, then it is clear that $\phi'$ also exists, in fact $\phi' = \phi$ and  $L(\psi') L(\phi') \otimes \phi'  = L(\psi)  L(\phi) \otimes \phi$.  Finally assume that there is no $\psi \in AP_{n}$ such that $L(\psi) \psi  =   {^{(n)}\! w}   u$ and that $\phi'$ exists with $L(\psi') L(\phi' ) \in \P$.  If $n$ is even,  Lemma \ref{A}(i) applied on ${^{(n)}\! w}$  and $\phi'$ says that $t(p_1) \leq s(\phi')$ and hence $L(\psi') L(\phi' )=0$.  If $n$ is odd, Lemma \ref{A}(ii) applied on ${^{(n-1)}\! w } $ and $\phi'$ implies the existence of $\psi$, a contradiction.

\item [(ii)] Assume $(\rho, \gamma) \in (1,0)_m^-$.  Then $\rho= \alpha \hat \rho = \alpha \rho_1 \cdots \rho_s$, $\gamma= \alpha \hat  \gamma$ and $\alpha \rho_1 \in \R$.  Then
$f_n (1 \otimes w \otimes 1) = 0$ if $w^{(m) }\not = \rho$.  In this case $f_{n-1} d_{n+m} (1 \otimes w \otimes 1)$ also vanishes: the assertion is clear if $\rho$ does not divide $w$ and, if it does, $w=L(\rho) \rho R(\rho)$ with $\vert R(\rho)\vert >0$ and $\hat f(\rho) R(\rho)$ vanishes.
Assume now that $w^{(m)} = \rho$. This means that $w$ contains the relation $q^{n+1}=\alpha \rho_1$, by Lemma \ref{p=q} we have that $q^{n+1}=p_{n+1}$ and $q^{n}=p_n$,  and by Lemma \ref{cuadratica} we have that $\vert \Sub(w)\vert =2$. Then
\begin{eqnarray*}
f_{n-1} d_{n+m} (1 \otimes w \otimes 1) &=& L(\psi_1) f_{n-1} (1 \otimes \psi_1 \otimes 1) + (-1)^{n+m} f_{n-1} (1 \otimes \psi_2 \otimes 1)R(\psi_2) \\
&=& L(\psi_1) f_{n-1} (1 \otimes \psi_1 \otimes 1)
\end{eqnarray*}
since $\hat f(\psi_2^{(m)}) =0$, and $\psi_1= w^{op}(q^2, \cdots, q^{n+m-1})= {^{(n-1)}\! \psi_1}  \ v \ w^{(m)} =  {^{(n-1)}\! \psi_1}  \ v \ \rho$. 
If $n$ is odd, by Remark \ref{f par} we have that 
\[ f_{n-1} d_{n+m} (1 \otimes w \otimes 1) = L(\psi_1) \otimes {^{(n-1)}\! \psi_1}  \otimes  v \hat f(w^{(m)}) =  L(\psi_1) \otimes {^{(n-1)}\! \psi_1}  \otimes  v  \gamma , \]
and if $n$ is even
\[ f_{n-1} d_{n+m} (1 \otimes w \otimes 1) = \sum_{\substack{\phi \in AP_{n-1} \\  L(\phi) \phi R(\phi)= {^{(n-1)}\! \psi_1} v} }    L(\psi_1) L(\phi) \otimes \phi \otimes  R(\phi) \gamma.\]

On the other hand, $w=  {^{(n)}\! w} \ u \ w^{(m)} =  {^{(n)}\! w} \ u \ \rho$ and
\begin{eqnarray*}
  f_n (1 \otimes w \otimes 1) &=& \sum_{\substack{\psi \in AP_{n} \\  L(\psi) \psi R(\psi) =  {^{(n)}\! w} u  }}  L(\psi) \otimes \psi \otimes R(\psi) \hat f(w^{(m)}) \\
&=& \sum_{\substack{\psi \in AP_{n} \\   L(\psi) \psi R(\psi) =  {^{(n)}\! w}  u }}  L(\psi) \otimes \psi \otimes R(\psi) \ \alpha  \hat \gamma.
\end{eqnarray*}
By definition we have that $s(p_{n+1}) < t(p_n) < t(p_{n+1})$,  and $p_{n+1} = \alpha \rho_1$ so $t(p_n) = t(\alpha)$. Then ${^{(n)}\! w} \  u \ \alpha =  {^{(n+1)}\! w}$ and
\[ \{ \psi \in AP_{n}, \  L(\psi) \psi R(\psi) =  {^{(n)}\! w} u  \} =  \Sub(  {^{(n+1)}\! w} ) \setminus \{ \hat \psi \}  \]
where $\hat \psi = w^{op} (q^2,  \cdots, q^n)=  {^{(n-1)}\! \psi_1} \  v  \ \alpha$ and $ {^{(n+1)}\! w} =L(\psi_1) \hat \psi$. So
\[   f_n (1 \otimes w \otimes 1) =  d_{n+1} (1 \otimes  {^{(n+1)}\! w}  \otimes 1) \hat \gamma - (-1)^{n+1} L(\psi_1) \otimes \hat \psi \otimes \hat \gamma,\]
and hence
\[ d_n f_n  (1 \otimes w \otimes 1) = (-1)^{n} L(\psi_1) d_n (1 \otimes \hat \psi \otimes  1) \hat \gamma.\]
If $n$ is odd 
\[d_n (1 \otimes \hat \psi \otimes  1) = L(\hat \psi_1) \otimes \hat \psi_1 \otimes 1 - 1 \otimes \hat \psi_2 \otimes R(\hat \psi_2) \] 
with $\hat \psi_2 = {^{(n-1)}\! \psi_1} $, $R(\hat \psi_2) = v \alpha$ 
and $L(\psi_1)  L(\hat \psi_1) = 0$ because 
$$s(L(\psi_1) )= s(p_1) < t(p_1)  = t(q^1) \leq s(q^3) =s(\hat \psi_1) =  t(L(\hat \psi_1))$$
implies that $p_1$ divides $L(\psi_1)  L(\hat \psi_1) $. So 
\[ d_n f_n  (1 \otimes w \otimes 1) =  L(\psi_1) \otimes \hat \psi_2 \otimes R(\hat \psi_2)  \hat \gamma =  L(\psi_1) \otimes {^{(n-1)}\! \psi_1}  \otimes  v \gamma. \]
Finally, if $n$ is even 
\[ d_n f_n  (1 \otimes w \otimes 1) = \sum_{\substack{\phi \in AP_{n-1} \\ L(\phi) \phi R(\phi)= \hat \psi}} L(\psi_1) L(\phi) \otimes \phi \otimes  R(\phi) \hat \gamma\]
and the desired equality holds since 
\[ \{\phi \in AP_{n-1}:  L(\phi) \phi R(\phi)= {^{(n-1)}\! \psi_1} v \}  = \{\phi \in AP_{n-1}:   L(\phi) \phi R(\phi)= \hat \psi \} \setminus \{ \hat \psi_1 \}\]
and, as we have already seen, $L(\psi_1) L(\hat \psi_1)=0$.

\item [(iii)] If $(\rho, \gamma) \in (1,1)_m$ then $\rho=\alpha \hat \rho \beta = \alpha \rho_1 \dots \rho_s \beta$, $\gamma= \alpha \hat \gamma \beta$ and $\alpha \rho_1, \rho_s \beta \in \R$. Then
$f_n (1 \otimes w \otimes 1) = 0$ if $w^{(m) }\not = \rho$.  In this case $f_{n-1} d_{n+m} (1 \otimes w \otimes 1)$ also vanishes: the assertion is clear if $\rho$ does not divide $w$ and, if it does, Lemma \ref{cuadratica} says that $\vert \Sub (w) \vert = 2$ and then
\[ f_{n-1} d_{n+m} (1 \otimes w \otimes 1) = L(\psi_1) f_{n-1}(1 \otimes \psi_1 \otimes 1) + (-1)^{n+m} f_{n-1}(1 \otimes \psi_2 \otimes 1) R(\psi_2).\]
The first summand vanishes since $\psi_1^{(m)} = w^{(m)} \not = \rho$.  For the second one, observe that it vanishes if $\psi_2^{(m)} \not = \rho$. If $\psi_2^{(m)}= \rho$ then $\psi_2= w(p_1, \cdots, p_{n+m-2})$ with $p_{n+m-2} = \rho_s \beta$ and $p_{n+m-1}= \beta R(\psi_2)$.  Then $\hat f(\psi_2^{(m)}) R(\psi_2) = \alpha \hat \gamma \beta R(\psi_2)=0$.

Assume now that $w^{(m)} = \rho$. This means that $w$ contains the relation $q^{n+1}=\alpha \rho_1$ and the proof follows exactly as in (ii).

\item [(iv)] If $(\rho, \gamma) \in {^-(0,1)_m}$ then $\rho= \hat \rho \beta = \rho_1 \cdots \rho_s \beta$, $\gamma= \hat \gamma \beta$ and $\rho_s \beta \in \R$. Now $f_n (1 \otimes w \otimes 1) = 0$ if $w^{(m) }\not = \rho$.  In this case $f_{n-1} d_{n+m} (1 \otimes w \otimes 1)$ also vanishes:
the assertion is clear if $\rho$ does not divide $w$ and, if it does, let $\hat \psi \in \Sub(w)$ be such that $\hat \psi^{(m)} = \rho$. Then
\begin{eqnarray*}
f_{n-1} d_{n+m} (1 \otimes w \otimes 1) &=&  f_{n-1} (L(\hat \psi) \otimes \hat \psi \otimes R(\hat \psi) )\\
&=&  \sum_{\substack{\phi \in AP_{n-1} \\L(\phi) \phi R(\phi)=  {^{(n-1)}\! \hat \psi \hat u} }}
L(\hat \psi) L(\phi) \otimes \phi \otimes R(\phi) \gamma R(\hat \psi ).
\end{eqnarray*}
Now $Q_1 \gamma \subset I$, then 
\[ f_{n-1} d_{n+m} (1 \otimes w \otimes 1) = L(\hat \psi) L(\phi) \otimes \phi \otimes \gamma R(\hat \psi )\]
if there exists $\phi \in AP_{n-1}$ such that $ L(\phi) \phi =  {^{(n-1)}\! \hat \psi \hat u}$.  Since $w^{(m)} \not = \rho$, we have that $\vert R(\hat \psi) \vert >0$ and $t(\beta) = s(R(\hat \psi))$.  Now,  the relation $\rho_s \beta$ implies that $w$ contains a relation starting in $\beta$, and hence $\gamma R(\hat \psi) = \hat \gamma \beta R(\hat \psi) =0$.

Assume now that $w^{(m) }= \rho$.  Then
\[f_{n-1} d_{n+m} (1 \otimes w \otimes 1) =  L(\psi_1) f_{n-1} (1 \otimes \psi_1 \otimes 1) \]
since $\psi^{(m)} \not = \rho$ for any $\psi \in \Sub(w), \psi \not = \psi_1$.
Now 
\[  f_{n-1} (1 \otimes \psi_1 \otimes 1) = \sum_{\substack{\phi \in AP_{n-1} \\ L(\phi) \phi R(\phi) ={^{(n-1)}\!  \psi_1} v} }L(\phi) \otimes \phi \otimes R(\phi)  \gamma \]
but $Q_1 \gamma \subset I$ so
\[f_{n-1} d_{n+m} (1 \otimes w \otimes 1) = L(\psi_1) L(\phi) \otimes \phi \otimes \gamma \]
if there exists $\phi \in AP_{n-1}$ such that $L(\phi) \phi  ={^{(n-1)}\!  \psi_1} v$ and zero otherwise.

On the other hand, 
\begin{eqnarray*}
 d_n f_n (1 \otimes w \otimes 1) &=& d_n \left ( \sum_{\substack{\psi \in AP_n \\ L(\psi) \psi R(\psi) = {^{(n)}\!  w} u}}
L(\psi) \otimes \psi \otimes  R(\psi) \right ) \gamma \\
&=& L(\psi) d_n ( 1 \otimes \psi \otimes 1)   \gamma 
\end{eqnarray*}
if there exists $\psi \in AP_{n}$ such that $L(\psi) \psi =  {^{(n)}\!  w} u$ and zero otherwise. In the first case
\begin{eqnarray*}
 d_n f_n (1 \otimes w \otimes 1) &=&  L(\psi)  \sum_{\substack{\phi \in AP_{n-1} \\ L(\phi) \phi R(\phi) =\psi}} L(\phi) \otimes \phi \otimes R(\phi)  \gamma \\
& = & L(\psi)  L(\phi') \otimes \phi' \otimes \gamma
\end{eqnarray*}
with $L(\psi) \psi =  L(\psi)  L(\phi')  \phi' =      {^{(n)}\!  w} u$. 
If $\psi$ and $\phi$ do not exist, the desired equality is clear.  If $\psi$ exists, then it is clear that $\phi$ also exists, in fact $\phi = \phi'$ and $L(\psi_1) L(\phi) \otimes \phi  = L(\psi)  L(\phi') \otimes \phi' $.

Finally assume that there is no $\psi \in AP_{n}$ such that $L(\psi) \psi  =   {^{(n)}\! w}   u$ and that $\phi$ exists with $L(\psi_1) L(\phi) \in \P$.  If $n$ is even,  Lemma \ref{A}(i)  applied on ${^{(n)}\! w}$  and $\phi$ says that $t(p_1) \leq s(\phi)$ and hence $L(\psi_1) L(\phi )=0$.  If $n$ is odd, Lemma \ref{A}(ii) applied on ${^{(n-1)}\! w } $ and $\phi$ implies the existence of $\psi$, a contradiction. 

\item [(v)] Finally consider the case for the subset $(id +(-1)^{m} \phi_m)((1,0)_m^+$, that is,  $(\rho, \gamma) \in (1,1)$, $\rho = \alpha \hat \rho \beta, \gamma = \alpha \hat \gamma \beta$ and $f= f_{(\alpha \hat \rho, \alpha \hat \gamma)} + (-1)^m  f_{( \hat \rho \beta,  \hat \gamma \beta)}$. In this case $\alpha \rho_1, \rho_s \beta \in \R$.

Now $f_n (1 \otimes w \otimes 1) = 0$ if $w^{(m) }\not = \alpha \hat \rho, \hat \rho \beta$.  In this case $f_{n-1} d_{n+m} (1 \otimes w \otimes 1)$ also vanishes: the assertion is clear if neither $\alpha \hat \rho$ nor $\hat \rho \beta$ divide $w$.  Assume that one of them does, then $w$ contains the cuadratic relation $\alpha \rho_1$ or $\rho_s \beta$, and hence $\vert \Sub(w) \vert =2$.  So 
\begin{eqnarray*}
f_{n-1} d_{n+m} (1 \otimes w \otimes 1) &=& L(\psi_1) f_{n-1} (1 \otimes \psi_1 \otimes 1) + (-1)^{n+m} f_{n-1} (1 \otimes \psi_2 \otimes 1)R(\psi_2) \\
&=&  (-1)^{n+m} f_{n-1} (1 \otimes \psi_2 \otimes 1)R(\psi_2)
\end{eqnarray*}
since $\psi_1^{(m)} =   w^{(m) }\not = \alpha \hat \rho, \hat \rho \beta$. If $\psi_2^{(m)} \not = \alpha \hat \rho, \hat \rho \beta$, then $f_{n-1} (1 \otimes \psi_2 \otimes 1)=0$.
If $\psi_2^{(m)} = \alpha \hat \rho$ we have that $\beta$ does not divide $w$ since otherwise $w^{(m) } =  \hat \rho \beta$; in this case $\hat f(\psi_2^{(m)}) R(\psi_2)= \alpha   \hat \gamma R(\psi_2)=0$
because $\vert R(\psi_2) \vert >0$, $\hat \gamma \beta \not =0$ and $\beta$ is not the first arrow of $R(\psi_2)$.  If $\psi_2^{(m)} = \hat \rho \beta$ then $p_{n+m-2} = \rho_s \beta$ and $p_{n+m-1}= \beta R(\psi_2)$, so $\hat f(\psi_2^{(m)}) R(\psi_2)= \hat \gamma \beta R(\psi_2)=0$.

Assume now that $w^{(m)}= \alpha \hat \rho$.  Then $q^{n+1}= \alpha \rho_1 = p_{n+1}$ and
\begin{eqnarray*}
f_{n-1} d_{n+m} (1 \otimes w \otimes 1) &=& L(\psi_1) f_{n-1} (1 \otimes \psi_1 \otimes 1) + (-1)^{n+m} f_{n-1} (1 \otimes \psi_2 \otimes 1)R(\psi_2) \\
&=&  L(\psi_1) f_{n-1} (1 \otimes \psi_1 \otimes 1)
\end{eqnarray*}
since $\hat f(\psi_2^{(m)}) =0$.
Now the proof follows as in (ii).

If $w^{(m)}= \hat \rho \beta$ assume that $\alpha$ does not divide $w$, that is, $\psi_2^{(m)} \not = \alpha \hat \rho$. Then $q^{n+m-1}=  \rho_s \beta = p_{n+m -1}$ and
\begin{eqnarray*}
f_{n-1} d_{n+m} (1 \otimes w \otimes 1) &=& L(\psi_1) f_{n-1} (1 \otimes \psi_1 \otimes 1) + (-1)^{n+m} f_{n-1} (1 \otimes \psi_2 \otimes 1)R(\psi_2) \\
&=&  L(\psi_1) f_{n-1} (1 \otimes \psi_1 \otimes 1)
\end{eqnarray*}
since $\hat f(\psi_2^{(m)}) =0$.
Now the proof follows as in (iv).

Finally, assume that $w^{(m)}= \hat \rho \beta$ and that $\alpha$ divides $w$.  In this case $\psi_2^{(m)} = \alpha \hat \rho$, $p_n = \alpha \rho_1=q^n$, $p_{n+m-1}= \rho_r \beta = q^{n+m-1}$, $w= {^{(n)}\!  w} w^{(m)}$.  So
\[ d_n f_n (1 \otimes w \otimes 1) = d_n (1 \otimes  {^{(n)}\!  w} \otimes 1) \hat f(\hat \rho \beta) = (-1)^m  d_n (1 \otimes  {^{(n)}\!  w} \otimes 1) \hat \gamma \beta.\] 
On the other hand, $w$ contains a quadratic divisor so $\Sub(w) = \{ \psi_1, \psi_2\}$, and $R(\psi_2)=\beta$.  Then
\begin{eqnarray*}
f_{n-1} d_{n+m} (1 \otimes w \otimes 1) &=& L(\psi_1) f_{n-1} (1 \otimes \psi_1 \otimes 1) + (-1)^{n+m} f_{n-1} (1 \otimes \psi_2 \otimes 1)R(\psi_2) \\
&=&   L(\psi_1) \otimes {^{(n-1)}\!  \psi_1}  \otimes \hat f( \hat \rho \beta) \\
& + &  (-1)^{n+m}
\sum_{\substack{\phi \in AP_{n-1} \\L(\phi) \phi R(\phi)=  {^{(n-1)}\!  \psi_2  u} }}
 L(\phi) \otimes \phi \otimes R(\phi) \hat f(\alpha \hat \rho) \beta \\
&=&  (-1)^m L(\psi_1) \otimes {^{(n-1)}\!  \psi_1 } \otimes  \hat \gamma \beta \\
& + &  (-1)^{n+m}
\sum_{\substack{\phi \in AP_{n-1} \\L(\phi) \phi R(\phi)=  {^{(n-1)}\! \psi_2  u} }}
 L(\phi) \otimes \phi \otimes R(\phi)  \alpha \hat \gamma \beta .
\end{eqnarray*}
The equality follows since ${^{(n)}\!  w} = {^{(n-1)}\!  \psi_2  u} \alpha= L(\psi_1)  {^{(n-1)}\!  \psi_1} $ and
\[\Sub( {^{(n)}\!  w} ) = \{  {^{(n-1)}\!  \psi_1} \} \cup \{ \phi \in AP_{n-1}: L(\phi) \phi R(\phi)=  {^{(n-1)}\!  \psi_2  u} \}.\]
\end{itemize} \qed
\end{proof}

In order to describe the product $[g \cup f]$ we need to choose convenient representatives of the classes $[f]$ and $[g]$, see \cite{Bu}.
Given $f \in \Hom_{A-A} (A \otimes kAP_m \otimes A, A)$ we define $f^{\leq}$ and $f^{\geq}$ as follows: we start by considering basis elements $f_{(\rho, \gamma)}$
\[ f_{(\rho, \gamma)}^{\leq} = \begin{cases}
f_{(\rho, \gamma)} & \mbox{if $(\rho, \gamma) \in (0,0)_m$}, \\
(-1)^{m-1} f_{\phi_m (\rho, \gamma)} & \mbox{if $(\rho, \gamma) \in (1,0)_m^+$}, \\
(-1)^{m-1} f_{\psi_m (\rho, \gamma)} & \mbox{if $(\rho, \gamma) \in (1,0)_m^{-+}$}, \\
0 & \mbox{if $(\rho, \gamma) \in (1,0)_m^{--}$}, \\
f_{(\rho, \gamma)} & \mbox{if $(\rho, \gamma) \in (0,1)_m$}, \\
0 & \mbox{if $(\rho, \gamma) \in (1,1)_m$}
\end{cases} \]
and 
\[ f_{(\rho, \gamma)}^{\geq} = \begin{cases}
f_{(\rho, \gamma)} & \mbox{if $(\rho, \gamma) \in (0,0)_m$}, \\
(-1)^{m-1}f_{\phi^{-1}_m (\rho, \gamma)} & \mbox{if $(\rho, \gamma) \in {^+ \! (0,1)_m}$}, \\
(-1)^{m-1}f_{\psi^{-1}_m (\rho, \gamma)} & \mbox{if $(\rho, \gamma) \in {^{+-}\! (0, 1)_m}$}, \\
0 & \mbox{if $(\rho, \gamma) \in {^{--}\! (0,1)_m}$}, \\
f_{(\rho, \gamma)} & \mbox{if $(\rho, \gamma) \in (1,0)_m$}, \\
0 & \mbox{if $(\rho, \gamma) \in (1,1)_m$}
\end{cases} \]
and then we extend by linearity. 
By Lemma \ref{nucleo} we have that $f - f^{\leq}, f - f^{\geq} \in \Im F_{m}$ for any $m >0$, and hence $[f] = [f^{\leq}] = [f^{\geq}]$. Moreover, observe that $f^{\leq}$ is a linear combination of basis elements in $(0,0)_m \cup (0,1)_m$ and $f^{\geq}$ is a linear combination of basis elements in $(0,0)_m \cup (1,0)_m$.

\begin{teo}
If $A$ is a triangular string algebra and $n, m>0$  then $\HH^n(A) \cup \HH^m(A) =0$.
\end{teo}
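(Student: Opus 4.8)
The plan is to represent the cup product by the explicit chain lift of $f$ produced in the preceding Proposition. Since $[f]\in\HH^m(A)$ and $[g]\in\HH^n(A)$ with $n,m>0$, I may assume $\hat f\in\Ker F_{m+1}$, so the maps $f_n$ built from Lemma \ref{particion} form a chain map lifting $f$ and $g\cup f$ is represented by $g\circ f_n$. Writing $w={^{(n)}\!w}\,u\,w^{(m)}\in AP_{n+m}$ as in Lemma \ref{particion} and applying the $A$-bilinear map $g$, one gets
\[ \widehat{g\cup f}(w)=\sum_{\substack{\psi\in AP_n\\ L(\psi)\psi R(\psi)={^{(n)}\!w}\,u}} L(\psi)\,\hat g(\psi)\,R(\psi)\,\hat f(w^{(m)}).\]
Thus it suffices to prove that, for suitable representatives, every monomial on the right lies in $I$; then $g\circ f_n$ is the zero map and its class vanishes.

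First I would replace $f$ and $g$ by the cohomologous representatives $f^{\leq}$ and $g^{\leq}$ introduced above. By the remark following their definition these are supported on $(0,0)_m\cup(0,1)_m$ and $(0,0)_n\cup(0,1)_n$ respectively, and, differing from $f,g$ by coboundaries, they still lie in the relevant kernels. The point of this choice is that every value $\hat f(w^{(m)})=\gamma$ then has first index $0$: writing $\rho=w^{(m)}=\alpha_1\hat\rho\alpha_2$, the parallel path $\gamma$ does not begin with $\alpha_1$.

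The heart of the argument is a local relation at the factorization point $u$. By Lemma \ref{particion} there is a relation $q^n=a'\,u\,b'$ with $a',b'$ nontrivial, and $b'$ a prefix of $\rho$, so $b'$ begins with $\alpha_1$. Since the elements of $\R$ have minimal length, the proper subpath $u\alpha_1$ of $q^n$ is not in $I$; hence, writing $\eta$ for the last arrow of $u$ (or, if $u$ is trivial, for the last arrow of ${^{(n)}\!w}$), condition S2 forces $\alpha_1$ to be the unique arrow with $\eta\alpha_1\notin I$. As $\gamma$ begins with an arrow $\epsilon\neq\alpha_1$ and $s(\epsilon)=t(\eta)$, we obtain $\eta\epsilon\in I$. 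Consequently, whenever the arrow immediately preceding $\hat f(w^{(m)})$ in a monomial is $\eta$ — which happens as soon as $R(\psi)$ is nontrivial, and also, via the $(0,1)_n$ summands of $g^{\leq}$, when $R(\psi)$ is trivial and $\hat g(\psi)$ ends in the last arrow of $\psi$ — the monomial contains the relation $\eta\epsilon$ and therefore vanishes in $A$.

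The main obstacle is the remaining degenerate configurations: the summands with $R(\psi)$ trivial and $(\psi,\hat g(\psi))\in(0,0)_n$, where neither $R(\psi)$ nor the tail of $\hat g(\psi)$ supplies the arrow $\eta$. For these I would turn to the opposite junction $L(\psi)\hat g(\psi)$, exploiting that $\hat g(\psi)$ does not begin with the first arrow of $\psi$, together with the dual straddling relation $p_n$ and the description of $\Sub$ in Lemmas \ref{cuadratica} and \ref{dos}, in order to either exhibit a relation or cancel such terms in pairs. Carrying out this case analysis uniformly over all $\psi$ in the sum and all support types of the chosen representatives — keeping precise track of which of the three junctions $L(\psi)\hat g(\psi)$, $\hat g(\psi)R(\psi)$, $R(\psi)\hat f(w^{(m)})$ carries the relation — is where the real work lies.
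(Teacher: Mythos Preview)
Your proposal is incomplete by your own admission, and the symmetric choice $f^{\leq},g^{\leq}$ has steered you into unnecessary difficulty. Two concrete points. First, your $\eta$-argument has a gap: when $u$ is trivial and $\vert q^n\vert=2$, the path $\eta\alpha_1$ is $q^n$ itself rather than a proper subpath, so $\eta\alpha_1\in I$ and condition S2 no longer forces $\eta\epsilon\in I$ for the competing arrow $\epsilon$. Second, and more to the point, you have not exploited the cocycle hypothesis on $f$ beyond the existence of the lift. Running the kernel basis of Lemma~\ref{nucleo} through the definition of $(\cdot)^{\leq}$ shows that for a cocycle the support of $f^{\leq}$ lies in ${^-(0,0)_m^-}\cup{^-(0,1)_m}$, not merely in $(0,0)_m\cup(0,1)_m$; in either case $Q_1\,\hat f^{\leq}(w^{(m)})\subset I$. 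Since $L(\psi)\,\hat g^{\leq}(\psi)\,R(\psi)$ has positive length whenever $n>0$ (triangularity forces $\vert\hat g^{\leq}(\psi)\vert>0$), every summand vanishes at once. This single observation dissolves both the $\eta$-case and your ``main obstacle'' with no analysis whatsoever at the junction $L(\psi)\hat g(\psi)$.

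The paper follows a closely related but \emph{asymmetric} route: it pairs $g^{\leq}$ with $f^{\geq}$ rather than $f^{\leq}$. For cocycles this puts $\hat f^{\geq}$ in ${^-(0,0)^-}\cup(1,0)$ and $\hat g^{\leq}$ in ${^-(0,0)^-}\cup(0,1)$. The ${^-(0,0)^-}$ contributions vanish as above; in the sole remaining $(1,0)\times(0,1)$ case one has $\hat f^{\geq}(w^{(m)})=\alpha\hat\gamma$ beginning with the first arrow of $w^{(m)}$ and $\hat g^{\leq}(\psi)=\hat\gamma'\beta$ ending with the last arrow of $\psi$, one checks $\psi={^{(n)}\!w}$, and the product then literally contains the relation $p_n=\beta u\alpha$. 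The moral in both approaches is the same: it is the cocycle condition, encoded in the refined support of the representatives, that kills the cup product, not a local S2 argument at the splice point $u$.
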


\begin{proof}
Let $[f] \in \HH^m(A)$ and $[g] \in \HH^n(A)$.  We will show that $[ g^{\leq} \cup f^{\geq}] =0$. Let $w \in AP_{n+m}$, $w=  {^{(n)}\!  w}  \ u \ w^{(m)}$ then
\[g^{\leq} \cup f^{\geq} ( 1 \otimes w \otimes 1) = \sum_{\substack{ \psi \in AP_n \\ L(\psi) \psi R(\psi) =  {^{(n)}\!  w} \ u}} L(\psi) \hat g^\leq ( \psi ) R(\psi) \hat f^\geq (w^{(m)}).\]
Since $f \in \Ker F_m$ and $g \in \Ker F_n$, we know that $f$ and $g$ are linear combination of basis elements as described in Lemma \ref{nucleo}. Moreover, $f^\geq$ is a linear combination of basis elements in $^{-}\! (0,0)^- \cup (1,0)$ and $g^\leq$ is a linear combination of basis elements in $^{-}\! (0,0)^- \cup (0,1)$.

The vanishing of the previous computation is clear if $f^\geq$ or $g^\leq$ are basis elements associated to pairs in $^{-}\! (0,0)^-$ because for $n,m >0$ we have that $\vert   \hat g^\leq (\psi)  \vert >0$ and $ \vert  f^\geq (w^{(m)})  \vert >0$.  Finally, if $f^\geq$ is a basis element associated to a pair $(\alpha \rho, \alpha  \gamma) \in (1,0)$ and  $g^\leq$ is a basis element associated to a pair $( \rho'  \beta,  \gamma' \beta) \in (0,1)$ then we only have to consider the summand with $\psi =  \rho' \beta$ and $w^{(m)} = \alpha  \rho$. In this case $w$ verifies the following conditions: $p_{n+1} = q^{n+1} = \alpha \rho_1$, $ {^{(n)}\!  w} \ u$, and hence also $ {^{(n+1)}\!  w}$, contains the quadratic relation $\rho'_s \beta$, by Lemma \ref{cuadratica} the element ${^{(n+1)}\!  w}$ has exactly two divisors, one of them sharing the ending point with ${^{(n+1)}\!  w}$, so $\psi = {^{(n)}\!  w}$ and $p_n = \beta u \alpha$. Now the summand we are considering is 
\begin{eqnarray*}
L(\psi) \hat g^\leq ( \psi ) R(\psi) \hat f^\geq (w^{(m)}) & = & \hat g^\leq ( {^{(n)}\!  w} ) u \hat f^\geq (w^{(m)}) \\
& = &  \gamma' \beta u \alpha  \gamma \\
& = &  \gamma' p_n   \gamma = 0.
\end{eqnarray*}  \qed
\end{proof}

\end{document}